\documentclass[11pt, reqno]{amsart}
\setlength{\textwidth}{460pt}
\setlength{\textheight}{636pt}

\usepackage{amsmath, amsfonts, amsthm, amssymb, multicol, mathtools, dsfont, mathrsfs}
\usepackage{graphicx}
\usepackage{float, hyperref}
\usepackage{scalerel,stackengine, subcaption}
\usepackage[usenames,dvipsnames,x11names]{xcolor}
\usepackage{enumitem}
\usepackage{pgfplots}
\usepgfplotslibrary{fillbetween}
\pgfplotsset{width=10cm,compat=1.9}
\usepackage[square,sort,comma,numbers]{natbib}
\setlength{\bibsep}{0.7pt}

\allowdisplaybreaks

\makeatletter
\g@addto@macro\bfseries{\boldmath}
\makeatother

\makeatletter
\def\@setauthors{%
  \begingroup
  \def\thanks{\protect\thanks@warning}%
  \trivlist
  \centering\footnotesize \@topsep30\p@\relax
  \advance\@topsep by -\baselineskip
  \item\relax
  \author@andify\authors
  \def\\{\protect\linebreak}

  \normalsize\lowercase{\authors}%
  
	\ifx\@empty\contribs
  \else
    ,\penalty-3 \space \@setcontribs
    \@closetoccontribs
  \fi
  \endtrivlist
  \endgroup
}
\def\@settitle{\begin{center}
\LARGE\lowercase{\@title}
  \end{center}%
}
\makeatother
\newcommand{\authoremail}[1]{\email{\href{mailto:#1}{\color{lightblue}{#1}}}}
\newcommand{\authoraddress}[1]{\address{\normalfont{#1}}}

\hoffset=-1.8cm\voffset=-1cm

\setcounter{secnumdepth}{3}
\numberwithin{equation}{section}

\newtheorem{thm}{Theorem}[section]
\newtheorem{lma}[thm]{Lemma}
\newtheorem{cor}[thm]{Corollary}
\newtheorem{defn}[thm]{Definition}

\renewcommand{\epsilon}{\varepsilon}
\newcommand{\eps}{\varepsilon}

\renewcommand{\geq}{\geqslant}
\renewcommand{\leq}{\leqslant}

\newcommand{\hd}{\dim_{\textup{H}}}



\makeatletter
\DeclareRobustCommand\widecheck[1]{{\mathpalette\@widecheck{#1}}}
\def\@widecheck#1#2{%
    \setbox\z@\hbox{\m@th$#1#2$}%
    \setbox\tw@\hbox{\m@th$#1%
       \widehat{%
          \vrule\@width\z@\@height\ht\z@
          \vrule\@height\z@\@width\wd\z@}$}%
    \dp\tw@-\ht\z@
    \@tempdima\ht\z@ \advance\@tempdima2\ht\tw@ \divide\@tempdima\thr@@
    \setbox\tw@\hbox{%
       \raise\@tempdima\hbox{\scalebox{1}[-1]{\lower\@tempdima\box
\tw@}}}%
    {\ooalign{\box\tw@ \cr \box\z@}}}
\makeatother

\stackMath
\newcommand\reallywidehat[1]{%
\savestack{\tmpbox}{\stretchto{%
  \scaleto{%
    \scalerel*[\widthof{\ensuremath{#1}}]{\kern.1pt\mathchar"0362\kern.1pt}%
    {\rule{0ex}{\textheight}}
  }{\textheight}%
}{2.4ex}}%
\stackon[-6.9pt]{#1}{\tmpbox}%
}
\parskip 0.5ex

\definecolor{lightblue}{HTML}{2B77A4}
\colorlet{plotblue}{LightSkyBlue3!80}
\definecolor{darkred}{HTML}{9E0D0D}
\definecolor{purp}{HTML}{d603a9}
\definecolor{dartmouthgreen}{HTML}{00A64F}
\definecolor{Junglegreen}{HTML}{00A99A}
\definecolor{yellowcolour}{HTML}{f07c02}
\hypersetup{
	colorlinks=true,
	linkcolor=darkred,
	urlcolor=darkred,
	citecolor=lightblue
}
\urlstyle{same}

\title{Exceptional projections in finite fields:\\
Fourier analytic bounds  and  incidence geometry}

\usepackage{fancyhdr}
\pagestyle{fancy}
\fancyhf{}
\rhead{Fraser, Rakhmonov}
\lhead{Page \thepage}

\author{\large{Jonathan M. Fraser}}
\authoraddress{Jonathan M. Fraser, University of St Andrews, Scotland}
\authoremail{jmf32@st-andrews.ac.uk}
\thanks{JMF was  financially supported by a  \emph{Leverhulme Trust Research Project Grant} (RPG-2023-281)  and an \emph{EPSRC Standard Grant} (EP/Y029550/1).}

 \author{\large{Firdavs Rakhmonov}}
\authoraddress{Firdavs Rakhmonov, University of St Andrews, Scotland}
\authoremail{fr52@st-andrews.ac.uk}
\thanks{FR was  financially supported by a  \emph{Leverhulme Trust Research Project Grant} (RPG-2023-281).}

\date{}

\begin{document}
\thispagestyle{empty}

\begin{abstract}
We consider the problem of bounding the number of exceptional projections (projections which are smaller than typical)  of a subset of a vector space over a finite field onto subspaces.  We establish bounds  that depend on $L^p$ estimates for the Fourier transform, improving various known bounds for sets with sufficiently good Fourier analytic properties.  The special case $p=2$ recovers a recent result of Bright and Gan (following Chen), which established the finite field analogue of Peres--Schlag's bounds from the continuous setting.

We prove several auxiliary results of independent interest, including a character sum identity for subspaces (solving a problem of Chen) and a full generalization of Plancherel's theorem for subspaces. These auxiliary results also have applications in affine incidence geometry, that is, the problem of estimating  the number of incidences between a set of points and a set of affine $k$-planes. We present a novel and direct proof of a well-known result in this area that avoids the use of spectral graph theory, and we provide simple examples demonstrating that these estimates are sharp up to constants.

\emph{Mathematics Subject Classification}: primary:  05B25, 42B10, 11B30; secondary: 51A05, 28A78.
\\
\emph{Key words and phrases}: orthogonal projections, vector space over finite field, Fourier transform, Marstrand’s projection theorem, Gaussian binomial coefficient, incidence geometry,   point–plane incidences.
\end{abstract}
\maketitle
\tableofcontents

\section{Introduction}

Marstrand's projection theorem is one of the most celebrated and fundamental results in fractal geometry.  It was proved in 1954 but later generalised by Kaufmann (who gave a simpler Fourier analytic proof) and Mattila (who extended it to higher dimensions); see \cite{mattila75, mattila}. It states that for a Borel set $E \subseteq \mathbb{R}^n$ with Hausdorff dimension $\hd E$, the Hausdorff dimension of the orthogonal projection of $E$ onto almost all $k$-dimensional subspaces is $\min\{\hd E,k\}$. Here, `almost all' is taken with respect to the Grassmannian measure supported on the Grassmannian manifold $G(k,n)$, which consists of all $k$-dimensional linear subspaces of $\mathbb{R}^n$. There has been an enormous amount of work studying the exceptional set in Marstrand's theorem. Due to work of Mattila, Falconer, Bourgain, Peres--Schlag, and others, we know the following refinement of Marstrand's theorem, stated in a form due to Mattila \cite{mattila75} and Peres--Schlag \cite{peres}. For a Borel set $E \subseteq \mathbb{R}^n$,
\begin{equation} \label{marstrand}
\hd \{ V \in G(k,n) : \hd \pi_V(E) \leq u \} \leq k(n-k) + u-\max\{\hd E ,k\}
\end{equation}
for all $0 \leq u < \min\{\hd E,k\}$ such that the right-hand side is non-negative. Here, $\pi_V(E)$ denotes the orthogonal projection of $E$ onto $V$. Moreover,  $k(n-k)$ is the dimension of the Grassmannian $G(k,n)$, so these exceptional sets are not only of zero Grassmannian measure but also of smaller dimension.

There is significant interest in analogues of these results in vector spaces over finite fields \cite{chen, bright,lundphamvinh}. This continues a productive line of research in which classical problems from geometric measure theory are adapted to the finite field setting. Famous examples include the Kakeya problem \cite{dvir, wolff},  the distance set problem \cite{iosevich}, and Szemer\'edi--Trotter type incidence geometry \cite{bennett, vinh}. 

Let $q$ be a power of a prime, and let $\mathbb{F}_q$ denote the finite field with $q$ elements. Let $\mathbb{F}_q^n$ be the $n$-dimensional vector space over $\mathbb{F}_q$, where $n \geq 1$.  We refer the reader to \cite{lidl} for the basic theory of finite fields. The appropriate analogue of \eqref{marstrand} in the finite field setting is the following:  For $E \subseteq \mathbb{F}_q^n$, 
\begin{equation} \label{marstrandfinite}
    |\{V\in G(k,n): |\pi_V(E)|\leq u\}|\lesssim  \frac{q^{k(n-k)} u }{\max\{ |E|, q^{k}\}}
\end{equation}
for all $0 \leq u \leq q^{-\eps} \min\{ |E|, q^k\}$ for some $\eps>0$. Here, $G(k, n)$ again denotes the set of all $k$-dimensional linear subspaces of $\mathbb{F}_q^n$, which, in this case, is a finite set. Moreover, $\pi_V(E)$ denotes the projection of $E \subseteq \mathbb{F}_q^n$ onto the subspace $V$ of $\mathbb{F}_q^n$; see Definition \ref{projdef} for the precise definition. 

In the above and throughout, the notation $A\lesssim B$ signifies that $A\leq C B$ for some absolute constant $C>0$. Similarly, we write $A\lesssim_\lambda B$ to indicate the existence of a constant $C(\lambda)>0$, depending on the parameter $\lambda$, such that $A\leq C(\lambda) B$. Furthermore, we use $A\approx B $ to denote that both $A \lesssim B$ and $B \lesssim A$ hold, i.e., there exist universal positive constants $C_1$ and $C_2$ such that $C_1 A\leq B\leq C_2A$.  We also use $|X|$ to denote the cardinality of a finite set $X$ and $\mathbb{N}_0$ to denote the set of non-negative integers. Additionally, we will use standard asymptotic notations such as $f=o(g), f=O(g),$ and $f\sim g$, which mean that (for non-negative functions $f,g$ of $q$)  $f/g\to 0$, $f/g$ is bounded by some absolute constant, and $f/g\to 1$, respectively.

Chen \cite[Theorem 1.2]{chen} established \eqref{marstrandfinite} in the case of prime fields, and Bright and Gan extended this to arbitrary fields; see \cite[Theorem 6 and equation (13)]{bright}.  Our main result   (Theorem \ref{main projection theorem}) is a strengthening of \eqref{marstrandfinite}, which takes into account $L^p$ properties of the Fourier transform. This strategy follows a general approach introduced in \cite{fraserfinite} and also constitutes a finite field analogue of certain results in \cite{ana}, which address the Euclidean case.  The special case $p=2$ recovers \eqref{marstrandfinite} and provides an alternative proof to that of Bright--Gan  (see Corollary \ref{main cor}).

Our argument follows Chen's  approach, and in particular, a key step is establishing the analogue of \cite[Lemma 2.4]{chen} in the non-prime case.  This result involves certain character sums associated with subspaces of $\mathbb{F}_q^n$ and is of independent interest (see Lemma \ref{sum of characters}). It resolves an explicit problem posed by Chen, whose resolution was also conjectured by Bright and Gan. Further auxiliary results of independent interest include a full generalization of Plancherel's theorem for subspaces of $\mathbb{F}_q^n$ (Theorem \ref{complete Plancherel on subspaces}) and certain moment identities for incidences, generalizing a result of Murphy, Petridis, and Chen (Lemma \ref{0-th moment}).

Incidence geometry, and in particular the study of incidences between a set of points and a set of lines, is an active and central topic in combinatorics, with applications and connections to many problems.  We are interested in incidence geometry in vector spaces over finite fields, which has attracted sustained attention over many years and also found applications to several problems, including sum-product theory, the distance set problem, and many others. While working on exceptional projections, we discovered that many of our auxiliary results could also be used to estimate the number of incidences between a set of points and a set of $k$-planes in $\mathbb{F}_q^n$. This problem has an interesting history, and there are many results in the literature that obtain incidence bounds of this type by appealing to spectral graph theory. Notable examples include  Vinh's result on point--line incidences \cite{vinh}, as well as later work by Lund and Saraf \cite{lund} and also Bennett, Iosevich, and Pakianathan \cite{bennett}. In particular, we know the following result.  If $I(E,\mathcal{A})$ denotes the number of incidences between a set of points $E \subseteq \mathbb{F}_q^n$ and a set of affine $k$-planes $\mathcal{A}$ in $\mathbb{F}_q^n$, then
\begin{equation} \label{knownit}
\left| I(E,\mathcal{A}) - \frac{|E||\mathcal{A}|}{q^{n-k}} \right| \leq \left(q^{k(n-k)}|E||\mathcal{A}|\right)^{\frac{1}{2}} (1+o(1)).
\end{equation}

Despite there being a lot of interest in this area, it was only recently rediscovered that this result, in fact, goes back to the 1980s in work of Haemers \cite{haemers}; see also \cite{dvirda}. As with the results mentioned above, Haemers results on incidences are obtained via spectral graph theory, in particular, by defining an appropriate graph associated with the points and planes and then appealing to the expander mixing lemma. We provide a new and self-contained proof of \eqref{knownit} that does not use spectral graph theory (see Theorem \ref{incidence theorem}). We also provide simple examples showing that \eqref{knownit} is the best possible result, up to constants, in all cases as a function of $|E||\mathcal{A}|$. 

In the final section \ref{biabh}, we tie the paper together by providing an alternative proof of \eqref{marstrandfinite} as a direct application of the incidence bound \eqref{knownit} (see Corollary \ref{projfromit}). On a heuristic level, the connection between these results is clear: for a projection onto a particular $V \in G(k,n)$ to be small, the fibers should be large, which corresponds to many incidences with a particular set of affine $(n-k)$-planes orthogonal to $V$. We note that our main theorem (Theorem \ref{main projection theorem}),  which improves \eqref{marstrandfinite} using $L^p$ estimates for the Fourier transform,  does \emph{not} appear to follow from incidence geometry.

\section{Background and preliminaries}

\subsection{Basics of discrete Fourier analysis} We use the machinery of discrete Fourier analysis to prove several of our results. In this short section, we provide an overview of the key concepts.

For a function $f : \mathbb{F}_q^n \to \mathbb{C}$, we define its \textit{Fourier transform} $\widehat{f} : \mathbb{F}_q^n \to \mathbb{C}$ as follows:
\begin{equation}
\label{FT}
    \widehat{f}(m)\coloneqq q^{-n}\sum_{x\in \mathbb{F}_q^n}f(x)\chi(-m\cdot x),
\end{equation}
where $\chi : \mathbb{F}_q \to S^1 \subseteq \mathbb{C}$ is a nontrivial additive character. The specific choice of $\chi$ does not play an important role in what follows. Here, $m \cdot x$ denotes the usual dot product and is an element of $\mathbb{F}_q$.

The following result, known as \textit{Plancherel's theorem}, will be used frequently:
\begin{equation}
\label{Plancherel's thm}
    \sum_{m\in \mathbb{F}_q^n}|\widehat{f}(m)|^2=q^{-n}\sum_{x\in \mathbb{F}_q^n}|f(x)|^2.
\end{equation}
For $E\subseteq \mathbb{F}_q^n$, we define $E(x)$ to be the indicator function of $E$, that is, 
\begin{equation*}
E(x) =
\begin{cases}
0, & \text{if }x\notin E, \\
1, & \text{if }x\in E.
\end{cases} 
\end{equation*}
In particular, from \eqref{Plancherel's thm}, for any set $E\subseteq \mathbb{F}_q^n$, we have:
\begin{equation}
\label{Plancherel's theorem for E}
    \sum_{m\in \mathbb{F}_q^n}|\widehat{E}(m)|^2=q^{-n}|E|=\widehat{E}(0).
\end{equation}

\subsection{Gaussian binomial coefficients}

In this subsection, we introduce the combinatorial object known as the Gaussian binomial coefficient or $q$-binomial coefficient.

\begin{defn}
\label{Gaussian binomial}
    Let $k,n\in \mathbb{N}_0$, and let $q$ be a power of a prime. The Gaussian binomial coefficient, or $q$-binomial coefficient, is defined as follows:
    \begin{equation*}
            \binom{n}{k}_q\coloneqq\begin{cases}
\dfrac{(q^n-1)(q^n-q)\dots(q^n-q^{k-1})}{(q^k-1)(q^k-q)\dots(q^k-q^{k-1})}, & \text{if } k\leq n,\\
0, & \text{if $k>n$}. 
\end{cases}
    \end{equation*}
    Note that $\binom{n}{0}_q=1$ because both the numerator and the denominator are empty products. 
\end{defn}
We also record the following definition, which was described informally above.
\begin{defn}
Let $k,n\in \mathbb{N}_0$. For $0\leq k \leq n$, let $G(k,n)$ denote the set of all $k$-dimensional linear subspaces of $\mathbb{F}_q^n$.
\end{defn} 
The following lemma demonstrates the relationship between $G(k, n)$ and $\binom{n}{k}_q$, along with other useful properties of the Gaussian binomial coefficient. These properties are analogous to those of the standard binomial coefficient, such as Pascal's rule \eqref{1st Pascal's id} and the symmetry rule \eqref{symmetry rule for GBC}. We will use these properties throughout the paper.  Although they are straightforward and well known, we could not find simple, self-contained proofs for some of them in the literature. Therefore, we provide proofs of the nontrivial parts here (see Section \ref{gaussiansection}).  

\begin{lma}
\label{relation between G(k,n) and GBC}
Let $k,n\in \mathbb{N}_0$. Then, the following statements hold: 
\begin{enumerate}
\item If $0\leq k\leq n$, then:
\begin{equation}
\label{size of G(k,n)}
|G(k,n)|=\binom{n}{k}_q.    
\end{equation}
\item Let $1\leq k\leq n$. If $z$ is a nonzero vector in $\mathbb{F}_q^n$, then:
\begin{equation}
\label{size of G(k,n) containing fixed z}
|\{V\in G(k,n): z\in V\}|=\binom{n-1}{k-1}_q.    
\end{equation}
\item Let $0\leq k\leq n$. If $z$ is a nonzero vector in $\mathbb{F}_q^n$, then:
\begin{equation}
\label{size of G(k,n):ort(V) contains z}
|\{V\in G(k,n): z\in V^{\perp}\}|=\binom{n-1}{k}_q.    
\end{equation}
\item If $q \geq 2$ and $0\leq k\leq n$, then: 
\begin{equation}
\label{ineqaulity for GBC}
q^{k(n-k)} \leq \binom{n}{k}_q \leq 4q^{k(n-k)}.  \end{equation}
\item If $1\leq k\leq n$, then:
\begin{equation}
\label{1st Pascal's id}
    \binom{n}{k}_q=\binom{n-1}{k}_q+q^{n-k}\binom{n-1}{k-1}_q,
\end{equation}
\item If $0\leq k\leq n$, then:
\begin{equation}
\label{symmetry rule for GBC}
    \binom{n}{k}_q=\binom{n}{n-k}_q.
\end{equation}
\item We have the following asymptotic relation:
\begin{equation}
\label{asymptotics of GBC}
    \binom{n}{k}_q=q^{k(n-k)}(1+O(q^{-1})) \quad \text{as} \quad q\to \infty.
\end{equation}
\end{enumerate}
\end{lma}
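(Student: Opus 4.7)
My plan is to verify the seven properties in sequence, using linear-algebraic counting arguments for the discrete identities and analysis of the explicit product formula for the quantitative bounds.

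For (1), I would count in two ways the set of ordered linearly independent $k$-tuples in $\mathbb{F}_q^n$: such a tuple is built by choosing any of $q^n-1$ nonzero vectors, then any of $q^n-q$ vectors outside the line just spanned, and so on, giving $\prod_{i=0}^{k-1}(q^n-q^i)$; alternatively, each $k$-dimensional subspace $V$ contains exactly $\prod_{i=0}^{k-1}(q^k-q^i)$ such ordered bases. The ratio is $\binom{n}{k}_q$ by definition. Property (6) is immediate from this together with the involution $V \mapsto V^\perp$, which is a bijection $G(k,n) \to G(n-k,n)$ because the standard dot product on $\mathbb{F}_q^n$ is non-degenerate and hence $\dim V + \dim V^\perp = n$.

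For (2), I would double-count the set of pairs $\{(V,z) : V \in G(k,n), \, 0 \neq z \in V\}$. Summing over $V$ first gives $|G(k,n)|(q^k-1)$, while $\mathrm{GL}_n(\mathbb{F}_q)$ acts transitively on nonzero vectors, so the $z$-fiber count $N$ is independent of $z$ and satisfies $N \cdot (q^n-1) = \binom{n}{k}_q (q^k-1)$; a direct rearrangement of the factors $(q^a-q^b)$ identifies the right-hand side with $(q^n-1)\binom{n-1}{k-1}_q$. For (3), I observe that $z \in V^\perp$ is the same as $V \subseteq \{z\}^\perp$, and since $z \neq 0$ makes $x \mapsto z \cdot x$ a nonzero functional, $\{z\}^\perp$ is an $(n-1)$-dimensional subspace of $\mathbb{F}_q^n$; the count is the number of $k$-dimensional subspaces of this hyperplane, equal to $\binom{n-1}{k}_q$ by (1). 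For the Pascal-type identity (5), I fix a hyperplane $H$ and split $G(k,n)$ by whether $V \subseteq H$ or $\dim(V \cap H) = k-1$. The first class contributes $\binom{n-1}{k}_q$, and for each $(k-1)$-dimensional $W \subseteq H$, every extension $V$ is obtained by adjoining a vector from $\mathbb{F}_q^n \setminus H$, and the vectors producing the same $V$ form an orbit of size $(q-1)q^{k-1}$ under $v \sim \alpha v + w$ with $\alpha \in \mathbb{F}_q^\times$, $w \in W$, giving $\frac{q^n-q^{n-1}}{(q-1)q^{k-1}} = q^{n-k}$ extensions per $W$.

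Parts (4) and (7) will follow from the identity
\[
\binom{n}{k}_q = q^{k(n-k)} \prod_{i=1}^{k} \frac{1 - q^{-(n-i+1)}}{1 - q^{-i}},
\]
obtained by factoring $q^i$ from each ratio. The lower bound in (4) follows because, after using (6) to reduce to $k \leq n-k$, every factor in the product exceeds $1$. The upper bound drops out of $\prod_{i=1}^k (1-q^{-i})^{-1} \leq \prod_{i=1}^\infty(1-2^{-i})^{-1}$, a constant I would verify numerically to be under $4$. The asymptotic (7) is read off directly, since for fixed $k,n$ each of the finitely many factors $1-q^{-j}$ is $1+O(q^{-1})$ as $q \to \infty$.

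The main obstacle, such as it is, is bookkeeping: one must carry out the algebraic reduction in (2) cleanly and verify the uniform numerical bound $\prod_{i\geq 1}(1-2^{-i})^{-1} < 4$ without appealing to heavy machinery, while also being careful that parts (4) and (6) are presented in a logically consistent order so that the symmetry argument for the lower bound is legitimate.
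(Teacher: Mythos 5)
Your proposal is correct and, for the core items, runs along the same lines as the paper: part (1) by the standard two-way count of ordered linearly independent $k$-tuples, part (3) by identifying $\{V: z\in V^\perp\}$ with $G(k,n-1)$ inside the hyperplane $\{z\}^\perp$, and part (4) by bounding the explicit product (your factorization $\binom{n}{k}_q = q^{k(n-k)}\prod_{i=1}^{k}\frac{1-q^{-(n-i+1)}}{1-q^{-i}}$ and the paper's separate bounds on numerator and denominator both reduce to the same estimate $\prod_{i\ge 1}(1-2^{-i})^{-1}<4$; note the paper's lower bound is obtained directly from $\frac{q^n-q^i}{q^k-q^i}\ge q^{n-k}$ for every $i$, so the detour through (6) that you flag is avoidable). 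Where you genuinely diverge is on the parts the paper omits or handles by pure algebra: for (2) the paper just says the count is analogous to (1), whereas you give a clean double count using transitivity of $\mathrm{GL}_n(\mathbb{F}_q)$ on nonzero vectors; for (5) the paper omits the "straightforward algebraic computation" while you give a combinatorial proof by splitting $G(k,n)$ according to containment in a fixed hyperplane, with the orbit count $\frac{q^n-q^{n-1}}{(q-1)q^{k-1}}=q^{n-k}$ of extensions per $(k-1)$-plane; and for (6) the paper performs a product reindexing while you invoke the bijection $V\mapsto V^\perp$, which requires (and you correctly note) non-degeneracy of the dot product so that $\dim V^\perp=n-\dim V$ and $(V^\perp)^\perp=V$. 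Your combinatorial arguments are all sound and arguably more illuminating than the omitted computations; the algebraic route has the minor advantage of not needing any facts about the bilinear form.
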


We defer the proof of Lemma \ref{relation between G(k,n) and GBC} to section \ref{gaussiansection}.

\section{Main results:  exceptional projections in finite fields}

Before formulating the main result of the paper, we first establish some definitions. Following \cite{fraserfinite}, we provide a framework to capture the Fourier analytic behaviour of a set $E \subseteq \mathbb{F}_q^n$.

\begin{defn}
For $E \subseteq \mathbb{F}_q^n$ and $p\in [1,+\infty)$, we define the $p$-norm of its Fourier transform as follows:
\begin{equation}
\label{p-norm of FT}
    \lVert \widehat{E}\rVert_p\coloneqq \bigg(q^{-n}\sum_{m\in \mathbb{F}_q^n\setminus \{0\}}|\widehat{E}(m)|^p \bigg)^{\frac{1}{p}}.
\end{equation}
\end{defn}

\begin{defn}
For $E \subseteq \mathbb{F}_q^n$, $p\in [1,+\infty)$, and $s\in [0,1]$, we say that $E$ is a $(p,s)$-Salem set if
\begin{equation}
\label{(p,s) Salem set}
\lVert \widehat{E}\rVert_p\lesssim_{p,s} q^{-n}|E|^{1-s}.
\end{equation}
\end{defn}

We notice that every set $E \subseteq \mathbb{F}_q^n$ is a $(2,\frac{1}{2})$-Salem set, which follows easily from \eqref{Plancherel's theorem for E}. Next, we formally define a \textit{projection} in \( \mathbb{F}_q^n \) as follows.

\begin{defn} \label{projdef}
    Let $V$ be a subspace of $\mathbb{F}_q^n$ and $E\subseteq \mathbb{F}_q^n$. The projection of $E$ onto $V$ is defined as
    \begin{equation*}
        \pi_V(E)\coloneqq \{x+V^\perp: x\in \mathbb{F}_q^n, (x+V^\perp)\cap E\neq \varnothing\}.
    \end{equation*}
\end{defn}

If $V\in G(k,n)$, it is not difficult to verify that 
\begin{equation*}
    |\pi_V(E)|\leq \min \{q^k, |E|\}.
\end{equation*}
We are interested in estimating the cardinality of the \emph{exceptional set}, which is defined as follows:
\begin{equation*}
    \{V\in G(k,n): |\pi_V(E)|\leq u\}
\end{equation*}
for $u>0$. The case of interest is $u < \min \{q^k, |E|\}$, because otherwise, the size of the exceptional set is simply $|G(k,n)|$, which, as stated in \eqref{ineqaulity for GBC}, is $\approx q^{k(n-k)}$. Now, we are ready to formulate the main result of the paper.
\begin{thm}
\label{main projection theorem}    
Let $p\in [2,+\infty), s\in [0,1]$, and $E\subseteq \mathbb{F}_q^n$ be a nonempty $(p,s)$-Salem set. If $0<u\leq \frac{1}{4} q^{\frac{2k}{p}}$, then 
\begin{equation*}
    |\{V\in G(k,n): |\pi_V(E)|\leq u\}|\lesssim_{p,s} u^{\frac{p}{2}}q^{k(n-k)}|E|^{-ps}.
\end{equation*}
\end{thm}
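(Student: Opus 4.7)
The strategy is to translate the projection condition into a lower bound on a Fourier quantity attached to $V$, and then exploit the $L^p$ Fourier decay of $E$ by averaging over $V \in G(k,n)$ via Hölder's inequality.

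The first step is a Cauchy--Schwarz reduction on cosets. Partition $\mathbb{F}_q^n$ into the $q^k$ cosets of $V^\perp$; writing $c_y = |E \cap y|$, we have $\sum_y c_y = |E|$ and $|\pi_V(E)| = |\{y : c_y > 0\}|$, so
\[
|E|^2 \le |\pi_V(E)| \cdot S(V), \qquad \text{where } S(V) := |\{(x,x') \in E \times E : x-x' \in V^\perp\}|.
\]
Hence $|\pi_V(E)| \leq u$ forces $S(V) \geq |E|^2/u$.

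Next, I would expand $S(V)$ spectrally. Using the orthogonality identity $\mathbf{1}_{V^\perp}(z) = q^{-k}\sum_{w \in V} \chi(w \cdot z)$ together with the definition \eqref{FT}, a direct calculation yields
\[
S(V) = q^{-k}|E|^2 + q^{2n-k}\sum_{w \in V \setminus \{0\}} |\widehat{E}(w)|^2.
\]
The hypothesis $u \leq q^{2k/p}/4 \leq q^k/4$ (using $p \geq 2$) ensures the diagonal term is absorbed, leaving $\sum_{w \in V \setminus \{0\}} |\widehat{E}(w)|^2 \gtrsim q^{k-2n}|E|^2/u$ for every exceptional $V$.

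Finally, I would raise this inequality to the $p/2$ power and sum over the exceptional set. Hölder's inequality on the $q^k-1$ frequencies of $V \setminus \{0\}$ (valid since $p \geq 2$) gives $(\sum_{w \in V \setminus \{0\}}|\widehat{E}(w)|^2)^{p/2} \leq q^{k(p/2-1)}\sum_{w \in V \setminus \{0\}} |\widehat{E}(w)|^p$. Summing over $V \in G(k,n)$ and swapping the order, each nonzero $w$ is counted with multiplicity $\binom{n-1}{k-1}_q \lesssim q^{(k-1)(n-k)}$ by \eqref{size of G(k,n) containing fixed z} and \eqref{ineqaulity for GBC}. The $(p,s)$-Salem bound $\|\widehat{E}\|_p^p \lesssim_{p,s} q^{-np}|E|^{p(1-s)}$ then controls $\sum_{w \neq 0}|\widehat{E}(w)|^p$, and a bookkeeping of exponents (the $q^{np}$ factors cancel and $k(p/2-1)-pk/2 = -k$ combines with $(k-1)(n-k)+n$ to give $k(n-k)$) produces the claimed bound.

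I anticipate no genuine obstacle: the Cauchy--Schwarz step is elementary, the Fourier expansion is a one-line orthogonality calculation, and the main combinatorial input on the multiplicity of $w$ in subspaces is already provided by Lemma \ref{relation between G(k,n) and GBC}. The only care required is exponent bookkeeping after invoking Hölder, and verifying that the constraint $u \leq q^{2k/p}/4$ is used only through its weaker consequence $u \leq q^k/4$ in the absorption of the diagonal term.
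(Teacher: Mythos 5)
Your proposal is correct, and it is in substance the paper's own argument: Cauchy--Schwarz over the cosets of $V^\perp$, a Fourier expansion of the resulting second moment, H\"older to pass from $L^2$ to $L^p$ of $\widehat{E}$ on $V$, and double counting over the Grassmannian via $\binom{n-1}{k-1}_q\lesssim q^{(k-1)(n-k)}$. Two remarks. First, your ``one-line orthogonality calculation'' $\mathbf{1}_{V^\perp}(z)=q^{-k}\sum_{w\in V}\chi(w\cdot z)$ is, for $z\notin V^\perp$ and non-prime $q$, precisely the character-sum identity of Lemma \ref{sum of characters} (applied with $U=V^\perp$); your spectral formula for $S(V)$ is then Lemma \ref{Plancherel on subspaces} summed over cosets. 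This is the one genuinely nontrivial input (it is the point of Chen's problem), so it should be cited rather than dismissed as routine --- but it is available, so there is no gap. Second, your order of operations differs from the paper's in a way that matters: you peel off the zero frequency at the $L^2$ stage, absorbing the diagonal term $q^{-k}|E|^2$ into $|E|^2/u$ using only $u\leq q^k/4$, and then apply H\"older pointwise in $V$ to the nonzero frequencies; the paper instead applies H\"older to the full frequency sum averaged over the exceptional set and splits off $m=0$ afterwards, which is what forces its stronger hypothesis $u\leq \frac14 q^{2k/p}$. Your exponent bookkeeping checks out ($-k+(k-1)(n-k)+n=k(n-k)$ and $|E|^{-p}\cdot|E|^{p(1-s)}=|E|^{-ps}$), so your argument proves the stated theorem and in fact establishes it under the weaker assumption $u\leq \frac14 q^{k}$, a small but genuine improvement over the statement as given.
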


This theorem can be interpreted as a finite field analogue of \cite[Corollary 6.1]{ana}, which considers the continuous setting.  It is perhaps noteworthy that in \cite{ana}, there is no analogue of the condition $0<u\leq \frac{1}{4} q^{\frac{2k}{p}}$, which constrains the pairs $(u,p)$ that can be considered simultaneously. This condition appears in Theorem \ref{main projection theorem} because $0 \in \mathbb{F}_q^n$ plays a significant role  in the analysis, whereas it can be ignored completely in  the continuous case.

The fact that the upper bound in Theorem \ref{main projection theorem} depends on $p$ allows one to optimize it by choosing the $p$ from the allowed range (that is, such that $u \leq\frac{1}{4} q^{\frac{2k}{p}}$), which gives the strongest bound.  This affords the theorem considerable flexibility.  It is also worth noting that for $u \leq \frac{q^k}{4}$, we are always allowed to choose $p=2$ and for $u \lesssim   q^{k-\varepsilon}$ there is a non-trivial range of allowable $p$.  Further, by Plancherel, we may apply it in the case $p=2$ for arbitrary sets without making any further assumptions about the Fourier analytic behaviour. Indeed, by setting $p=2$ (in which case we may also set $s=\frac{1}{2}$) in Theorem \ref{main projection theorem} and following Chen's argument, we obtain \eqref{marstrandfinite} in full generality -- Marstrand's projection theorem in finite fields with Peres--Schlag--Mattila bounds on the exceptional set. This generalizes a  result of Chen \cite{chen} to the case of non-prime fields and recovers a recent result by Bright and Gan.  In fact, Bright and Gan prove something stronger but obtain \eqref{marstrandfinite} as a special case \cite[Theorem 1.6 and (13)]{bright}.

\begin{cor}
\label{main cor}    
Let $E\subseteq \mathbb{F}_q^n$ be a nonempty set. If $0<u\leq \frac{1}{4} \min\{q^{k},|E|\}$, then 
\begin{equation*}
    |\{V\in G(k,n): |\pi_V(E)|\leq u\}|\lesssim \frac{u q^{k(n-k)}}{\max\{|E|,q^k\}}.
\end{equation*}
In particular, if $0<u \leq  \frac{1}{4} \min\{q^{k},|E|\}$ and $u = o(\max\{|E|,q^k\})$, then
\[
 |\{V\in G(k,n): |\pi_V(E)|\leq u\}|=o(|G(k,n)|).
\]
\end{cor}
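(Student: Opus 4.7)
The plan is to combine two bounds whose minimum produces the asymmetric $\max\{|E|,q^k\}$ in the denominator. First, I observe that Plancherel's identity \eqref{Plancherel's theorem for E} makes every nonempty $E\subseteq \mathbb{F}_q^n$ automatically a $(2,\tfrac12)$-Salem set, so setting $p=2$ and $s=\tfrac12$ in Theorem \ref{main projection theorem}---the hypothesis $u\leq \tfrac14 q^{2k/p}$ reducing to $u\leq \tfrac14 q^k$, which is assumed---directly yields
\[
|\{V\in G(k,n) : |\pi_V(E)|\leq u\}| \lesssim \frac{u\,q^{k(n-k)}}{|E|}.
\]
This is already the desired estimate when $|E|\geq q^k$.

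For the complementary regime $|E|<q^k$ the bound above is too weak, and I would argue combinatorially instead. For any $V$ in the exceptional set, Cauchy--Schwarz applied to the fibre sizes of $\pi_V|_E$ gives
\[
|\{(x,y)\in E\times E : x\neq y,\ x-y\in V^{\perp}\}| \;\geq\; \frac{|E|^2}{|\pi_V(E)|} - |E| \;\geq\; \frac{|E|^2}{2u},
\]
where the last step uses the standing hypothesis $u\leq |E|/4$. Summing over all exceptional $V$, swapping the order of summation, and invoking \eqref{size of G(k,n):ort(V) contains z} together with \eqref{ineqaulity for GBC} then gives
\[
|\{V : |\pi_V(E)|\leq u\}|\cdot \frac{|E|^2}{2u} \;\leq\; |E|^2 \binom{n-1}{k}_q \;\lesssim\; |E|^2\, q^{k(n-k)-k},
\]
which rearranges to $|\{V: |\pi_V(E)|\leq u\}| \lesssim u\, q^{k(n-k)}/q^k$, with no Fourier input at all.

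Taking the minimum of the two bounds yields $u\, q^{k(n-k)}/\max\{|E|,q^k\}$, proving the first assertion. The ``in particular'' statement then follows immediately from $|G(k,n)| \asymp q^{k(n-k)}$ (another consequence of \eqref{ineqaulity for GBC}): the extra hypothesis $u=o(\max\{|E|,q^k\})$ forces the right-hand side to be $o(q^{k(n-k)}) = o(|G(k,n)|)$.

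The one conceptual point is recognising that a single application of Theorem \ref{main projection theorem} cannot suffice: when $|E|<q^k$, the zero frequency no longer dominates the subspace Fourier sum driving the projection estimate, so the Fourier route is lossy and must be supplemented by the purely pigeonhole bound above. Once the two regimes are separated the proof is essentially a few lines in each case.
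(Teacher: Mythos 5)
Your proof is correct, and its overall structure matches the paper's: take the minimum of the Peres--Schlag-type bound $u q^{k(n-k)}|E|^{-1}$ (obtained exactly as you do, by setting $p=2$, $s=\tfrac12$ in Theorem \ref{main projection theorem}, every set being $(2,\tfrac12)$-Salem by Plancherel) and the Mattila-type bound $u q^{k(n-k)}q^{-k}$. The one difference is that for the second bound the paper simply cites Chen \cite[Theorem 1.2(a)]{chen}, noting his argument survives in the non-prime case, whereas you prove it from scratch via the second-moment count of pairs $(x,y)\in E\times E$ with $x-y\in V^{\perp}$, double counting against \eqref{size of G(k,n):ort(V) contains z} and \eqref{ineqaulity for GBC}. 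That energy argument is essentially Chen's, and your execution of it is sound: the hypothesis $u\leq |E|/4$ correctly absorbs the diagonal term, and the restriction $u\leq q^k/4$ is exactly what Theorem \ref{main projection theorem} needs at $p=2$. So your write-up is self-contained where the paper's is not; otherwise the two proofs coincide.
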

\begin{proof}[Proof of Corollary \ref{main cor}]
By setting $p=2$ and $s=\frac{1}{2}$ in Theorem \ref{main projection theorem}, we obtain
\begin{equation*}
    |\{V\in G(k,n): |\pi_V(E)|\leq u\}|\lesssim  u q^{k(n-k)}|E|^{-1}
\end{equation*}
which is the finite fields analogue of the exceptional set estimate due to Peres--Schlag \cite{peres}. The other estimate, which is the analogue of a result of Mattila, 
\begin{equation}
\label{Chen's bound}
    |\{V\in G(k,n): |\pi_V(E)|\leq u\}|\lesssim u q^{k(n-k)}q^{-k},
\end{equation}
was established by Chen \cite[Theorem 1.2 (a)]{chen} (Chen actually proves that the upper bound in \eqref{Chen's bound} is $u q^{k(n-k)}q^{k-n}$ because he uses a different definition for the projection $\pi_V(E)$, where projection onto $V$ refers to projection onto $V^{\perp}$.) Although Chen worked in the case of prime fields, his proof of this estimate also applies in the non-prime case.
\end{proof}

One of the main insights of this paper is that the use of $p$ in Theorem \ref{main projection theorem} provides a strengthening of \eqref{marstrandfinite}, at least in cases where good $L^p$ bounds hold for the Fourier transform of $E$.  More precisely, suppose $|E| \approx q^\alpha$ for some $\alpha \in (0,n)$ and $u \lesssim q^\beta$ for some $\beta <\min\{k,\alpha\}$. Then  Theorem \ref{main projection theorem} gives an asymptotically stronger estimate than  \eqref{marstrandfinite} whenever  the right hand side of \eqref{marstrandfinite} is a positive power of $q$ and $E$ is $(p,s)$-Salem for some $2<p<\frac{2k}{\beta}$ with
$$
s>\frac{\beta(\frac{p}{2}-1)+\max\{\alpha,k\}}{p\alpha} =\begin{cases}
\frac{\beta}{2\alpha}(1-\frac{2}{p})+\frac{1}{p}, & \text{if } \alpha \geq k, \\
\frac{\beta}{2\alpha}(1-\frac{2}{p})+\frac{k}{p\alpha}, & \text{if } \alpha < k.
\end{cases}$$

The case when $\alpha \geq k$ is particularly favorable because it was proved in \cite{fraserfinite} that \emph{all} $E$ are $(p,\frac{1}{p})$-Salem for \emph{all} $p \in [2,\infty]$. Therefore, any improvement over the trivial bound $s \geq \frac{1}{p}$ will yield an improvement over \eqref{marstrandfinite} for sufficiently small $\beta$ provided $k \leq \alpha <k(n-k)$.

Using our Fourier analytic approach, we can also obtain uniform lower bounds for the size of projections, that is, bounds that hold for \emph{all} $V \in G(k,n)$.  Another way to view these results is that the exceptional set is empty for small enough $u$.

\begin{thm}
\label{main projection theorem2}    
Let $p\in [2,+\infty), s\in [0,1]$, and let $E\subseteq \mathbb{F}_q^n$ be a nonempty $(p,s)$-Salem set. Then, for all $V \in G(k,n)$,
\[
|\pi_V(E)|^{\frac{1}{2}} \geq \frac{1}{C  q^{\frac{n-k}{p}} |E|^{ -s} + q^{-\frac{k}{2}}  } 
\]
where $C\approx_{p,s} 1$ is the implicit constant from the definition of a $(p,s)$-Salem set. In particular, for all $V \in G(k,n)$,
\[
|\pi_V(E)|  \gtrsim \min\left\{\frac{|E|^{2s}}{q^{\frac{2(n-k)}{p}}}, q^{k}\right\}.
\]
\end{thm}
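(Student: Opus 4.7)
The plan is to combine a Cauchy--Schwarz reduction with a Plancherel-type identity, and then bring in the $(p,s)$-Salem hypothesis via Hölder's inequality. Fix $V \in G(k,n)$ and, choosing coset representatives in $V$, set $N(y)\coloneqq |E \cap (y+V^\perp)|$ for $y \in V$. Then $|E| = \sum_{y \in V} N(y)$ and $|\pi_V(E)| = |\{y \in V : N(y)>0\}|$, so Cauchy--Schwarz immediately gives
\[
|\pi_V(E)| \;\geq\; \frac{|E|^2}{\sum_{y \in V} N(y)^2}.
\]
The task is therefore to bound $\sum_{y \in V} N(y)^2$ from above.

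Next, the sum is evaluated via Fourier analysis. Extending $N$ to a $V^\perp$-periodic function on $\mathbb{F}_q^n$, one has $N = E \ast 1_{V^\perp}$. A short character-sum computation gives $\widehat{1_{V^\perp}}(m) = q^{-k}\,1_V(m)$, so $\widehat{N}(m) = q^{n-k}\,1_V(m)\,\widehat{E}(m)$. Applying Plancherel to $N$ on $\mathbb{F}_q^n$ and then dividing by $|V^\perp|=q^{n-k}$ to remove the periodicity yields
\[
\sum_{y \in V} N(y)^2 \;=\; q^{2n-k}\sum_{m \in V} |\widehat{E}(m)|^2.
\]

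Now I bound $\sum_{m \in V} |\widehat{E}(m)|^2$. The $m=0$ contribution equals $|\widehat{E}(0)|^2 = q^{-2n}|E|^2$. For the nonzero modes, Hölder's inequality with exponents $p/2$ and $p/(p-2)$, combined with $|V \setminus \{0\}| \leq q^k$ and the $(p,s)$-Salem bound $\lVert \widehat{E} \rVert_p \leq C q^{-n}|E|^{1-s}$, gives
\[
\sum_{m \in V \setminus \{0\}} |\widehat{E}(m)|^2 \;\leq\; \Big(\sum_{m \neq 0} |\widehat{E}(m)|^p\Big)^{2/p} q^{k(1-2/p)} \;\leq\; C^2\, q^{-2n + 2n/p + k - 2k/p}\, |E|^{2(1-s)}.
\]
Multiplying by $q^{2n-k}$ and substituting back into the Cauchy--Schwarz bound, the two contributions combine into
\[
|\pi_V(E)| \;\geq\; \frac{1}{q^{-k} + C^2\, q^{2(n-k)/p}\, |E|^{-2s}}.
\]

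The stated lower bound on $|\pi_V(E)|^{1/2}$ then follows at once from the elementary inequality $\sqrt{a+b} \leq \sqrt{a}+\sqrt{b}$, and the ``in particular'' estimate is obtained by noting that the denominator on the right-hand side is comparable to $2\max\{q^{-k/2},\, C q^{(n-k)/p}|E|^{-s}\}$, so $|\pi_V(E)| \gtrsim \min\{|E|^{2s}/q^{2(n-k)/p},\, q^k\}$. The proof is otherwise mechanical; the most delicate point is the bookkeeping in the Plancherel identity for the $V^\perp$-periodic function $N$, and the decision to apply Hölder on the subspace $V$ rather than on all of $\mathbb{F}_q^n$, which is precisely what allows both the dimension of $V$ and the Fourier exponent $p$ to enter the final estimate.
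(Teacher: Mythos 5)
Your argument is, in substance, the paper's own proof of Theorem \ref{main projection theorem2}: the Cauchy--Schwarz reduction $|\pi_V(E)|\geq |E|^2/\sum N^2$ is inequality \eqref{1st inequality} rearranged; your Plancherel computation for $N=E\ast 1_{V^\perp}$ reproduces Lemma \ref{Plancherel on subspaces} (and implicitly invokes Lemma \ref{sum of characters} to evaluate $\widehat{1_{V^\perp}}$); and splitting off the $m=0$ mode, applying H\"older to the nonzero modes on $V$, enlarging to all of $\mathbb{F}_q^n\setminus\{0\}$, and finishing with $\sqrt{a+b}\leq\sqrt{a}+\sqrt{b}$ is exactly the chain of estimates in Section \ref{proofmain2}. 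The constants and the final bounds agree.

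One small but genuine slip in the setup: you index the cosets of $V^{\perp}$ by elements $y\in V$ and assert $|E|=\sum_{y\in V}N(y)$ and $|\pi_V(E)|=|\{y\in V: N(y)>0\}|$. Over finite fields the decomposition $\mathbb{F}_q^n=V\oplus V^{\perp}$ can fail, since $V\cap V^{\perp}$ need not be trivial (for example $V=\mathrm{span}\{(1,1)\}$ in $\mathbb{F}_2^2$ satisfies $V=V^{\perp}$), so the cosets $y+V^{\perp}$ with $y\in V$ need not exhaust $\mathbb{F}_q^n/V^{\perp}$ and both identities fail as written. The repair is immediate---sum over the $q^k$ cosets $V^{\perp}+t_{V^{\perp},j}$ with arbitrary representatives, as the paper does---and nothing downstream changes, since your Plancherel identity is already phrased for the full $V^{\perp}$-periodization of $N$ over $\mathbb{F}_q^n$.
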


The case $p\to\infty$ in Theorem \ref{main projection theorem2} recovers and extends \cite[Proposition 1.7]{chen}, which only applies in the case of prime fields. However, Theorem \ref{main projection theorem2} may provide an even stronger estimate than this because there is no reason for the bound to be optimized as $p \to \infty$.

\section{Character sums for subspaces and a problem of Chen}

We first establish a simple identity for character sums associated with subspaces, which is interesting in its own right. It generalizes Lemma 2.4 from \cite{chen}, which was proven only for the case when $q$ is a prime. In \cite{chen}, the author emphasizes that he does not know whether the result holds for vector spaces over general finite fields, see \cite[Remark 2.5]{chen}. Bright and Gan   hinted that they believe  Chen's result holds in the non-prime case, but did not provide a proof \cite[Remark 5]{bright}. We provide a direct and elementary argument.

In the following, if $U$ is a subspace of $\mathbb{F}_q^n$, then $U^{\perp}$ denotes the orthogonal complement of $U$. It is a well-known fact that if $\dim(U)=k$, then $\dim(U^{\perp})=n-k$, and hence $|U^{\perp}|=q^{n-k}$.

\begin{lma}
\label{sum of characters}
    Let $k,n\in \mathbb{N}_0$ such that $0\leq k\leq n$, and let $U\in G(n-k,n)$. If $x\notin U$, then 
    \begin{equation*}
        \sum_{y\in U^{\perp}}\chi(-x\cdot y)=0,
    \end{equation*}
    where $\chi:\mathbb{F}_q\to S^1\subseteq \mathbb{C}$ is a non-trivial additive character.
\end{lma}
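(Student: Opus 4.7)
The plan is to recognize the map $y \mapsto x \cdot y$ as an $\mathbb{F}_q$-linear functional on $U^\perp$, argue that the hypothesis $x \notin U$ forces it to be surjective, and then bundle the sum into fibers over $\mathbb{F}_q$ so that the standard character-sum identity $\sum_{t \in \mathbb{F}_q}\chi(t)=0$ finishes the job. Note that the case $k=0$ is vacuous (we have $U = \mathbb{F}_q^n$), so we may assume $k \geq 1$.

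More concretely, I would define $\phi : U^\perp \to \mathbb{F}_q$ by $\phi(y) \coloneqq x\cdot y$. Since $\phi$ is $\mathbb{F}_q$-linear and the codomain $\mathbb{F}_q$ is one-dimensional over itself, $\mathrm{Im}(\phi)$ is either $\{0\}$ or all of $\mathbb{F}_q$. If $\mathrm{Im}(\phi) = \{0\}$, then $x\cdot y = 0$ for every $y\in U^\perp$, meaning $x\in (U^\perp)^\perp$. Because the standard bilinear form on $\mathbb{F}_q^n$ is non-degenerate, $(U^\perp)^\perp = U$, so this would contradict the assumption $x\notin U$. Hence $\phi$ is surjective.

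By rank–nullity, $\ker\phi$ has dimension $k-1$, so each fiber $\phi^{-1}(t)$ for $t\in \mathbb{F}_q$ has cardinality exactly $q^{k-1}$. Partitioning the sum according to the value of $\phi$ therefore gives
\[
\sum_{y\in U^\perp} \chi(-x\cdot y) \;=\; \sum_{t\in \mathbb{F}_q} \sum_{y\in \phi^{-1}(t)} \chi(-t) \;=\; q^{k-1}\sum_{t\in \mathbb{F}_q}\chi(-t) \;=\; 0,
\]
where the final equality is the standard fact that a nontrivial additive character sums to zero over the whole field.

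The only conceptual ingredient beyond elementary linear algebra is the double-orthogonal identity $(U^\perp)^\perp = U$; this is the one point that requires care in the finite-field setting, since in positive characteristic $U$ and $U^\perp$ can intersect nontrivially. However, non-degeneracy of the dot product still guarantees $\dim U + \dim U^\perp = n$ and the involutive identity, so no further work is needed. Everything else is routine bookkeeping.
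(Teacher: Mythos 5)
Your proof is correct, but it takes a genuinely different (and more streamlined) route than the paper. You treat $y \mapsto x\cdot y$ as a surjective linear functional $\phi: U^\perp \to \mathbb{F}_q$, apply rank--nullity to see that every fiber is a coset of $\ker\phi$ of size $q^{k-1}$, and finish with the standard orthogonality relation $\sum_{t\in\mathbb{F}_q}\chi(-t)=0$ for a nontrivial character. The paper instead partitions $U^\perp$ according to the \emph{value of the character}, i.e.\ into the sets $A(s)=\{y\in U^\perp:\chi(-y\cdot x)=s\}$ indexed by $s\in\mu_p$ (the $p$-th roots of unity, $p$ the characteristic), and shows all $|A(s)|$ coincide by exhibiting an injective translation map $A(s)\to A(s^2)$ and cycling around the group $\mu_p$; the sum then vanishes because $\sum_{s\in\mu_p}s=0$. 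Both arguments hinge on the same nontrivial input, namely that $x\notin U=(U^\perp)^\perp$ produces some $y_0\in U^\perp$ with $x\cdot y_0\neq 0$, and you are right to flag the double-orthogonal identity as the one point needing care over finite fields (it follows from non-degeneracy of the dot product, even though $U\cap U^\perp$ may be nontrivial). Your version is shorter and makes the role of $\mathbb{F}_q$-linearity explicit, which is arguably the cleanest way to see why the lemma holds for arbitrary prime powers $q$; the paper's version is more combinatorial and sidesteps rank--nullity and the field-level character sum, needing only that the $p$-th roots of unity sum to zero. Either proof is complete and elementary.
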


\begin{proof}[Proof of Lemma \ref{sum of characters}]    The case $k=0$ is vacuously true. Thus, we assume that $k>0$. Let $q$ be a power of a prime $p$. (The use of $p$ in this lemma should not be confused with the use of $p$ throughout the rest of the paper in the context of $L^p$ norms of the Fourier transform.) Since $\chi:\mathbb{F}_q\to S^1\subseteq \mathbb{C}$ is a non-trivial additive character, it follows that $\chi(\mathbb{F}_q)=\mu_p$, where $\mu_p$ is the cyclic group of $p$-th roots of unity, i.e., $\mu_p=\{z\in \mathbb{C}: z^p=1\}$. 
For $s\in \chi(\mathbb{F}_q)$, we define
    \begin{equation*}
        A(s)\coloneqq \{y\in U^{\perp}: \chi(-y\cdot x)=s\}.
    \end{equation*}
    Notice that the set $\{A(s): s\in \chi(\mathbb{F}_q)\}$ forms a partition of $U^{\perp}$, i.e.,
    \begin{equation}
    \label{partition of V^{perp}}
        \bigsqcup\limits_{s\in \chi(\mathbb{F}_q)}A(s)=U^{\perp}.
    \end{equation}
    Observe that
    \begin{equation*}
        U=\{v\in \mathbb{F}_q^n: \forall y\in U^{\perp} \ (v\cdot y=0) \}.
    \end{equation*}
    Since $x\notin U$, there exists $y_0\in U^{\perp}$ such that $x\cdot y_0\neq 0$. Since $\chi$ is a nontrivial character, it follows that $\chi(\alpha)\neq 1$ for some $\alpha\in \mathbb{F}_q$. Consider 
    \begin{equation*}
        y\coloneqq \underbrace{\alpha(x\cdot y_0)^{-1}}_{\in \mathbb{F}_q}y_0\in U^{\perp}.    
    \end{equation*}
    Observe that 
    \begin{equation*}
    y\cdot x=(\alpha(x\cdot y_0)^{-1}y_0)\cdot x=\alpha(x\cdot y_0)^{-1}(y_0\cdot x)=\alpha.    
    \end{equation*}
         Therefore, $\chi(y\cdot x)=\chi(\alpha)\neq 1$, which implies that $\chi(-y\cdot x)\neq 1$. We have shown that there exists $y\in U^{\perp}$ such that $\chi(-y\cdot x)\neq 1$, which implies that $A(s)\neq \varnothing$, where $s\coloneqq \chi(-y\cdot x)\neq 1$. Fix $y\in A(s)$ and consider the mapping 
    \begin{equation*}
    f:U^{\perp}\to U^{\perp}, \quad  v\xmapsto{f} v+y,    
    \end{equation*}
     which is well-defined and injective.     We claim that $f(A(s))\subseteq A(s^2)$. Indeed, let $u\in f(A(s))$, so $u=f(v)$ for some $v\in A(s)$. Notice that $u\in U^{\perp}$ and 
    \begin{equation*}
        \chi(-u\cdot x)=\chi(-f(v)\cdot x)=\chi(-(v+y)\cdot x)=\chi(-v\cdot x)\chi(-y\cdot x)=s^2,
    \end{equation*}
    implying $u\in A(s^2)$. Therefore, the mapping $f:A(s)\to A(s^2)$ is well-defined, and $|A(s)|\leq |A(s^2)|$ because $f$ is injective. More generally, one can show that $|A(s^k)|\leq |A(s^{k+1})|$ for every positive integer $k$. Since $s\in \mu_p\setminus \{1\}$, its order is $p$, implying $s^{p+1}=s$. Therefore, we have:
\begin{equation*}
    |A(s)|\leq |A(s^2)|\leq \cdots \leq |A(s^p)|\leq |A(s^{p+1})|=|A(s)|, 
\end{equation*}
which implies 
\begin{equation*}
    |A(s)|=|A(s^2)|=\dots=|A(s^p)|,
\end{equation*}
where all the elements $\{s,s^2,\dots,s^p\}$ are distinct because $s\neq 1$. This immediately implies that $|A(s_1)|=|A(s_2)|$ for every $s_1,s_2\in \chi(\mathbb{F}_q)$. For simplicity, assume that $k$ is their common value, i.e., $k\coloneqq|A(s)|$ for every $s\in \chi(\mathbb{F}_q)$. Then, by \eqref{partition of V^{perp}}, 
    \begin{align*}
        \sum_{y\in U^{\perp}}\chi(-x\cdot y)&=\sum_{s\in \chi(\mathbb{F}_q)}\sum_{y\in A(s)}\chi(-x\cdot y)=\sum_{s\in \chi(\mathbb{F}_q)}\sum_{y\in A(s)}s\\
        &=\sum_{s\in \chi(\mathbb{F}_q)}s|A(s)|=k\sum_{s\in \chi(\mathbb{F}_q)}s=0. \qedhere
    \end{align*}
\end{proof}

\subsection{Plancherel's theorem for subspaces}

In this subsection, we establish  a generalization of Plancherel's theorem \eqref{Plancherel's thm} for subspaces of $\mathbb{F}_q^n$; see Theorem \ref{complete Plancherel on subspaces}. This result fully generalizes \cite[Lemma 2.3]{chen}, which considered only the case of prime fields and indicator functions of sets, although we follow Chen's argument. Note that Theorem \ref{complete Plancherel on subspaces} indeed generalizes \eqref{Plancherel's thm} when we take $U=\{0\}$. Before stating and proving the theorem, we first introduce some preliminaries and notation.

Let $U\in G(n-k,n)$, so $|U|=q^{n-k}$. Hence, the total number of distinct cosets of $U$ in $\mathbb{F}_q^n$ is  
\begin{equation*}
    [\mathbb{F}_q^n:U]=\frac{|\mathbb{F}_q^n|}{|U|}=q^k.    
\end{equation*}

We write  the distinct cosets of $U$ in $\mathbb{F}_q^n$ as $\{U+t_{U,j}: 1\leq j \leq q^k\}$, and note  that they form a partition of $\mathbb{F}_q^n$, i.e.,
\begin{equation}
\label{partition of space}
    \mathbb{F}_q^n=\bigsqcup_{j=1}^{q^k}(U+t_{U,j}).
\end{equation}

Moreover, if $f:\mathbb{F}_q^n\to \mathbb{C}$ and $A\subseteq \mathbb{F}_q^n$, we define
\begin{equation*}
    S_f(A)\coloneqq \sum_{a\in A}f(a).
\end{equation*}

\begin{thm}[Plancherel's theorem for subspaces]
\label{complete Plancherel on subspaces}
    Let $E\subseteq \mathbb{F}_q^n$ and $U\in G(n-k,n)$. Then for any $f:\mathbb{F}_q^n\to \mathbb{C}$ we have:
    \begin{equation}
    \label{complete generalization of Plancherel}
        \sum_{m\in U^{\perp}}|\widehat{f}(m)|^2=q^{-2n+k}\sum_{j=1}^{q^k}|S_f(U+t_{U,j})|^2.
    \end{equation}
\end{thm}

\begin{proof}[Proof of Theorem \ref{complete Plancherel on subspaces}]

Let $m\in U^{\perp}$. Then, by \eqref{FT} and \eqref{partition of space}, 
\begin{align*}
    \widehat{f}(m)=q^{-n}\sum_{x\in \mathbb{F}_q^n}f(x)\chi(-x\cdot m) 
    &=q^{-n}\sum_{j=1}^{q^k}\sum_{x\in U+t_{U,j}}f(x)\chi(-x\cdot m) \\
    &=q^{-n}\sum_{j=1}^{q^k}\sum_{u\in U}f(u+t_{U,j})\chi(-(u+t_{U,j})\cdot m) \\
    &=q^{-n}\sum_{j=1}^{q^k}\sum_{u\in U}f(u+t_{U,j})\chi(-t_{U,j}\cdot m) \\
    &=q^{-n}\sum_{j=1}^{q^k}\chi(-m\cdot t_{U,j})S_f(U+t_{U,j}). 
\end{align*}
Given this expression for $\widehat{f}(m)$, we can compute $|\widehat{f}(m)|^2$ as follows:
\begin{align*}
    |\widehat{f}(m)|^2&=q^{-2n}\sum_{i,j=1}^{q^k}\chi(m\cdot (t_{U,i}- t_{U,j}))S_f(U+t_{U,j}) \overline{S_f(U+t_{U,i})} \nonumber \\
    &=q^{-2n}\sum_{j=1}^{q^k}|S_f(U+t_{U,j})|^2  +q^{-2n}\sum_{\substack{i,j=1 \\ i\neq j}}^{q^k}\chi(m\cdot (t_{U,i}- t_{U,j}))S_f(U+t_{U,j}) \overline{S_f(U+t_{U,i})}.
\end{align*}
Summing over all $m\in U^{\perp}$ and taking into account that $|U^{\perp}|=q^k$ since $U\in G(n-k,n)$, we have:
\begin{align}
    \sum_{m\in U^{\perp}}|\widehat{f}(m)|^2&=q^{-2n+k}\sum_{j=1}^{q^k}|S_f(U+t_{U,j})|^2 \nonumber \\
    &+q^{-2n}\sum_{\substack{i,j=1 \\ i\neq j}}^{q^k}S_f(U+t_{U,j}) \overline{S_f(U+t_{U,i})}\sum_{m\in U^{\perp}}\chi(m\cdot (t_{U,i}- t_{U,j})) \label{inner zero sum} .
\end{align}
We observe that since $U\in G(n-k,n)$ and $i\neq j$, it follows that $t_{U,i}-t_{U,j}\notin U$. Indeed, if $t_{U,i}-t_{U,j}\in U$, this would imply that $t_{U,i}+U=t_{U,j}+U$, which is a contradiction since the cosets $t_{U,i}+U$ and $t_{U,j}+U$ are distinct because $i\neq j$. Hence, the inner sum in \eqref{inner zero sum} vanishes by Lemma \ref{sum of characters}, completing the proof.
\end{proof}

The following result will be very useful later and follows immediately from Theorem \ref{complete Plancherel on subspaces} by taking $f:\mathbb{F}_q^n\to \mathbb{C}$ to be the indicator function of a set $E\subseteq \mathbb{F}_q^n$, i.e., $f(x)=E(x)$.

\begin{cor}
\label{Plancherel on subspaces}
    Let $E\subseteq \mathbb{F}_q^n$ and $U\in G(n-k,n)$. Then we have:
    \begin{equation}
    \label{generalization of Plancherel}
        \sum_{m\in U^{\perp}}|\widehat{E}(m)|^2=q^{-2n+k}\sum_{j=1}^{q^k}|E\cap (U+t_{U,j})|^2.
    \end{equation}
\end{cor}

\begin{proof}[Proof of Corollary \ref{Plancherel on subspaces}] The proof follows directly from the following simple observation: if we let $f:\mathbb{F}_q^n\to \mathbb{C}$ be defined by $f(x)=E(x)$, then
\begin{equation*}
    S_f(U+t_{U,j})=\sum_{x\in U+t_{U,j}}E(x)=|E\cap (U+t_{U,j})|. \qedhere
\end{equation*}
\end{proof}

\section{Gaussian binomial coefficients: proof of Lemma \ref{relation between G(k,n) and GBC}} \label{gaussiansection}

\begin{enumerate}
    \item The case $k=0$ is trivial since both $|G(0,n)|$ and $\binom{n}{0}_q$ are equal to $1$. Thus, we assume that $k>0$ and define 
    \begin{equation*}
        \mathcal{A}\coloneqq\{(u_1,\dots,u_k)\in (\mathbb{F}_q^n)^k: \{u_1,\dots,u_k\} \ \textup{is linearly independent}\},
    \end{equation*}
    along with the mapping 
    \begin{equation*}
    f: \mathcal{A}\to G(k,n), \quad (u_1,\dots,u_k)\xmapsto{f} \textup{span}(u_1,\dots,u_k).   
    \end{equation*}
    It is straightforward to verify that the mapping $f$ is well-defined and surjective. Moreover, it is not difficult to show that for every $V\in G(k,n)$, we have: 
    \begin{equation*}
        |f^{-1}(\{V\})|=(q^k-1)(q^k-q)\dots(q^k-q^{k-1}).
    \end{equation*}
        Since $\mathcal{A}=f^{-1}(G(k,n))$, we obtain the following relation:
    \begin{equation}
    \label{relation between A and G(k,n)}
        |\mathcal{A}|=|G(k,n)|\cdot (q^k-1)(q^k-q)\dots(q^k-q^{k-1}).
    \end{equation}
    Similarly, one can compute that 
    \begin{equation}
    \label{value of |A|}
        |\mathcal{A}|=(q^n-1)(q^n-q)\dots (q^n-q^{k-1}).
    \end{equation}
    Substituting the value of $|\mathcal{A}|$ from \eqref{value of |A|} into \eqref{relation between A and G(k,n)}, we obtain:
    \begin{equation*}
        |G(k,n)|=\frac{(q^n-1)(q^n-q)\dots (q^n-q^{k-1})}{(q^k-1)(q^k-q)\dots(q^k-q^{k-1})},
    \end{equation*}
    or equivalently, $|G(k,n)|=\binom{n}{k}_q$, which completes the proof of \eqref{size of G(k,n)}.

\medskip

    \item The proof of this part can be carried out in a similar manner to part (1), so we omit it.

\medskip

    \item The case $k=0$ is trivial, and thus we assume that $k>0$. Let $z\in \mathbb{F}_q^n$ be a nonzero vector. It is then clear that
    \begin{equation}
    \label{equivalence}
        \{V\in G(k,n): z\in V^{\perp}\}=\{V\in G(k,n): V\subseteq (\text{span}(z))^{\perp}\}.
    \end{equation}
        We observe that $(\text{span}(z))^{\perp}$ is a $(n-1)$-dimensional subspace of $\mathbb{F}_q^n$, and hence $(\text{span}(z))^{\perp}\cong \mathbb{F}_q^{n-1}$. It is easy to demonstrate that 
    \begin{equation}
    \label{in terms of G(k,n-1)}
        |\{V\in G(k,n): V\subseteq (\text{span}(z))^{\perp}\}|=|G(k,n-1)|.
    \end{equation}
    By combining \eqref{equivalence} and \eqref{in terms of G(k,n-1)}, we obtain        $|\{V\in G(k,n): z\in V^{\perp}\}|=|G(k,n-1)|$,
    and, by \eqref{size of G(k,n)}, we have 
    \begin{equation*}
    |\{V\in G(k,n): z\in V^{\perp}\}|=\binom{n-1}{k}_q,    
    \end{equation*}
    which finishes the proof of \eqref{size of G(k,n):ort(V) contains z}.
    
\medskip

    \item The case $k=0$ is trivial since the inequality we wish to prove reduces to the trivial inequality $1\leq 1\leq 4$. Hence, we will assume that $k>0$. The lower bound can be obtained easily as follows:
    \begin{equation*}
        \binom{n}{k}_q=\prod_{i=0}^{k-1}\frac{q^n-q^i}{q^k-q^i}\geq \prod_{i=0}^{k-1}q^{n-k}=q^{k(n-k)}.
    \end{equation*}
        The upper bound requires a little more work. Trivially, we have
    \begin{equation}
    \label{upper bound for numerator}
        \prod_{i=0}^{k-1}(q^n-q^i)\leq q^{nk}.
    \end{equation}
        On the other hand, since $q\geq 2$, we have:
    \begin{equation} \label{lower bound for denom}
        \prod_{i=0}^{k-1}(q^k-q^i)=q^{k^2}\prod_{i=0}^{k-1}(1-q^{i-k})=q^{k^2}\prod_{i=1}^{k}\left(1-\frac{1}{q^i}\right)\geq q^{k^2}\prod_{i=1}^{k}\left(1-\frac{1}{2^i}\right) \geq \frac{q^{k^2}}{4}.
    \end{equation}
Combining the bounds \eqref{lower bound for denom} and \eqref{upper bound for numerator}, we obtain:
\begin{equation*}
    \binom{n}{k}_q=\frac{\prod\limits_{i=0}^{k-1}(q^n-q^i)}{\prod\limits_{i=0}^{k-1}(q^k-q^i)}\leq 4q^{k(n-k)},
\end{equation*}
which completes the proof of \eqref{ineqaulity for GBC}. 

\medskip

\item We omit the proof of this part, as it involves only straightforward algebraic computations.

\smallskip

\item We assume that $k>0$, as the case $k=0$ is trivial. First, assume that $k\leq n-k$, i.e., $2k\leq n$. Then, by the definition of the Gaussian binomial coefficient, we have:
\begin{align*}
    \binom{n}{k}_q&=\prod_{i=0}^{k-1}\frac{q^n-q^i}{q^k-q^i}=q^{k(n-2k)}\prod_{i=0}^{k-1}\frac{q^n-q^i}{q^{n-k}-q^{n-2k+i}}\\
    &=q^{k(n-2k)}\prod_{i=0}^{k-1}\frac{q^n-q^i}{q^{n-k}-q^{n-2k+i}} \prod_{i=0}^{n-2k-1}\frac{q^{n-k}-q^i}{q^{n-k}-q^i}\\
    &=\prod_{i=0}^{k-1}\frac{q^n-q^i}{q^{n-k}-q^{n-2k+i}} \prod_{i=0}^{n-2k-1}\frac{q^{n}-q^{i+k}}{q^{n-k}-q^i}\\
    &=\prod_{i=0}^{n-k-1}\frac{q^n-q^i}{q^{n-k}-q^i}=\binom{n}{n-k}_q.
\end{align*}
The case $k>n-k$ follows from the previous one by setting $m\coloneqq n-k$.
\end{enumerate}

\section{Exceptional projections: proof of Theorem \ref{main projection theorem}} \label{proofmain}

Let $E\subseteq \mathbb{F}_q^n$ and $u>0$. Consider the exceptional set defined as follows: 
\begin{equation*}
    \Theta \coloneqq \{V\in G(k,n): |\pi_V(E)|\leq u\}.    
\end{equation*}
We assume that $|\Theta|>0$; otherwise, there is nothing to prove. For $V\in G(k,n)$, we know that $V^{\perp}\in G(n-k,n)$, and the total number of distinct cosets of $V^{\perp}$ in $\mathbb{F}_q^n$ is $q^k$. Suppose the distinct cosets of $V^{\perp}$ in $\mathbb{F}_q^n$ are given by $\{V^{\perp}+t_{V^{\perp},j}: 1\leq j \leq q^k\}$, noting that they form a partition of $\mathbb{F}_q^n$. Therefore, we have:
\begin{equation*}
    E=\bigsqcup\limits_{j=1}^{q^k}E\cap (V^{\perp}+t_{V^{\perp},j}),
\end{equation*}
implying
\begin{equation*}
    |E|=\sum_{j=1}^{q^k}|E\cap (V^{\perp}+t_{V^{\perp},j})|.
\end{equation*} 
Applying the Cauchy–Schwarz inequality and using Definition \ref{projdef}, we obtain:
\begin{equation}
\label{1st inequality}
    |E|\leq |\pi_V(E)|^{\frac{1}{2}} \Bigg(\sum_{j=1}^{q^k}|E\cap (V^{\perp}+t_{V^{\perp},j})|^2\Bigg)^{\frac{1}{2}}.
\end{equation}
Summing $\eqref{1st inequality}$ over all $V\in \Theta$, we obtain:
\begin{equation*}
    |\Theta||E|\leq u^{\frac{1}{2}}\sum_{V\in \Theta}\Bigg(\sum_{j=1}^{q^k}|E\cap (V^{\perp}+t_{V^{\perp},j})|^2\Bigg)^{\frac{1}{2}}.
\end{equation*}
Applying the Cauchy–Schwarz inequality once more, we have:
\begin{equation*}
    |\Theta||E|\leq u^{\frac{1}{2}}|\Theta|^{\frac{1}{2}}\Bigg(\sum_{V\in \Theta}\sum_{j=1}^{q^k}|E\cap (V^{\perp}+t_{V^{\perp},j})|^2\Bigg)^{\frac{1}{2}},
\end{equation*}
which leads to 
\begin{equation}
\label{2nd inequality}
|E|\leq u^{\frac{1}{2}} |\Theta|^{-\frac{1}{2}} \Bigg(\sum_{V\in \Theta}\sum_{j=1}^{q^k}|E\cap (V^{\perp}+t_{V^{\perp},j})|^2\Bigg)^\frac{1}{2}.
\end{equation}
Applying Corollary \ref{Plancherel on subspaces} to the right-hand side of \eqref{2nd inequality} with $U=V^{\perp}$ and noting that $U^{\perp}=V$, we obtain:
\begin{align*}
|E|&\leq u^{\frac{1}{2}}q^n \Bigg(|\Theta|^{-1}\sum_{V\in \Theta}q^{-k}\sum_{m\in V}|\widehat{E}(m)|^2\Bigg)^\frac{1}{2}.
\end{align*}
Since $p\in [2,+\infty)$, we can apply Hölder's inequality to derive:
\begin{align}
|E|&\leq u^{\frac{1}{2}}q^n \Bigg(|\Theta|^{-1}\sum_{V\in \Theta}q^{-k}\sum_{m\in V}|\widehat{E}(m)|^p\Bigg)^\frac{1}{p} \nonumber \\
&=u^{\frac{1}{2}}q^{n-\frac{k}{p}} |\Theta|^{-\frac{1}{p}} \Bigg(\sum_{V\in \Theta}\sum_{m\in V}|\widehat{E}(m)|^p\Bigg)^\frac{1}{p} \label{inner sum}.    
\end{align}
We observe that $0\in V$ for every $V\in \Theta$, and we will split the inner sum in \eqref{inner sum} as follows:
\begin{align}
    |E|&\leq u^{\frac{1}{2}}q^{n-\frac{k}{p}} |\Theta|^{-\frac{1}{p}} \Bigg(\sum_{V\in \Theta}\sum_{m\in V\setminus \{0\}}|\widehat{E}(m)|^p+\sum_{V\in \Theta}|\widehat{E}(0)|^p\Bigg)^\frac{1}{p} \nonumber \\
    &=u^{\frac{1}{2}}q^{n-\frac{k}{p}} |\Theta|^{-\frac{1}{p}} \Bigg(\sum_{V\in \Theta}\sum_{m\in V\setminus \{0\}}|\widehat{E}(m)|^p+|\Theta||\widehat{E}(0)|^p\Bigg)^\frac{1}{p} \label{sum 1} \\
    &\leq u^{\frac{1}{2}}q^{n-\frac{k}{p}} |\Theta|^{-\frac{1}{p}} \Bigg[\bigg(\sum_{V\in \Theta}\sum_{m\in V\setminus \{0\}}|\widehat{E}(m)|^p\bigg)^{\frac{1}{p}}+\bigg(|\Theta||\widehat{E}(0)|^p\bigg)^\frac{1}{p}\Bigg] \label{sum 2} \\
    &=u^{\frac{1}{2}}q^{n-\frac{k}{p}} |\Theta|^{-\frac{1}{p}} \Bigg(\sum_{V\in \Theta}\sum_{m\in V\setminus \{0\}}|\widehat{E}(m)|^p\Bigg)^\frac{1}{p}+u^{\frac{1}{2}}q^{-\frac{k}{p}}|E| \label{nonzero double sum} .
\end{align}
We note that \eqref{sum 2} was derived from \eqref{sum 1} using the following trivial inequality: if $a,b\in [0,+\infty)$ and $p\in [1,+\infty)$, then $(a+b)^{\frac{1}{p}}\leq a^{\frac{1}{p}}+b^{\frac{1}{p}}$.
We proceed by estimating the double sum in \eqref{nonzero double sum} as follows.  Since $\Theta \subseteq G(k,n)$, we have:
\begin{align*}
    \sum_{V\in \Theta}\sum_{m\in V\setminus \{0\}}|\widehat{E}(m)|^p &\leq \sum_{V\in G(k,n)}\sum_{m\in V\setminus \{0\}}|\widehat{E}(m)|^p\\
    &=\sum_{m\in \mathbb{F}_q^n\setminus \{0\}}\sum_{\substack{V\in G(k,n)\\ m\in V}}|\widehat{E}(m)|^p\\
    &=\sum_{m\in \mathbb{F}_q^n\setminus \{0\}} |\widehat{E}(m)|^p \cdot |\{V\in G(k,n):m\in V\}|.
\end{align*}
By consecutively applying \eqref{size of G(k,n) containing fixed z} and \eqref{ineqaulity for GBC}, we obtain the following upper bound:
\begin{align}
    \sum_{V\in \Theta}\sum_{m\in V\setminus \{0\}}|\widehat{E}(m)|^p & \leq \binom{n-1}{k-1}_q \ \sum_{m\in \mathbb{F}_q^n\setminus \{0\}} |\widehat{E}(m)|^p \nonumber \\
    &\leq 4q^{(k-1)(n-k)}\sum_{m\in \mathbb{F}_q^n\setminus \{0\}} |\widehat{E}(m)|^p \nonumber \\
    &=4q^{(k-1)(n-k)+n} \lVert \widehat{E}\rVert_p^p \nonumber \\
    &=4q^{k(n-k)+k}\lVert \widehat{E}\rVert_p^p \label{p-norm}.
\end{align}
By combining the estimates \eqref{p-norm} and \eqref{nonzero double sum}, we derive the following upper bound for $|E|$:
\begin{align}
\label{upper bound for E}
    |E|\leq 2u^{\frac{1}{2}} q^{n+\frac{k(n-k)}{p}} |\Theta|^{-\frac{1}{p}}  \lVert \widehat{E}\rVert_p +u^{\frac{1}{2}}q^{-\frac{k}{p}}|E|.
\end{align}
Since we assumed that $E$ is a $(p,s)$-Salem set, we have $\lVert \widehat{E}\rVert_p \leq Cq^{-n} |E|^{1-s}$ for some $C=C(p,s)$, and by substituting this into \eqref{upper bound for E}, we obtain:
\begin{align}
\label{upper bound for E (2)}
    |E|\leq 2Cu^{\frac{1}{2}} q^{\frac{k(n-k)}{p}} |\Theta|^{-\frac{1}{p}} |E|^{1-s} +u^{\frac{1}{2}}q^{-\frac{k}{p}}|E|.
\end{align}
Given that $0<u\leq \frac{1}{4}q^{\frac{2k}{p}}$, performing some arithmetic manipulations yields the following upper bound:
\begin{align*}
    |\Theta|\leq (4C)^p u^{\frac{p}{2}}q^{k(n-k)}|E|^{-ps} \lesssim_{p,s} u^{\frac{p}{2}}q^{k(n-k)}|E|^{-ps},
\end{align*}
which completes the proof of Theorem \ref{main projection theorem}.

\section{Uniform lower bounds: proof of Theorem \ref{main projection theorem2}} \label{proofmain2}
Fix $V \in G(k,n)$. Following the beginning of the proof of Theorem \ref{main projection theorem}, we recall that \eqref{1st inequality} gives
\[
    |E|\leq |\pi_V(E)|^{\frac{1}{2}} \Bigg(\sum_{j=1}^{q^k}|E\cap (V^{\perp}+t_{V^{\perp},j})|^2\Bigg)^{\frac{1}{2}}.
\]
Applying Corollary \ref{Plancherel on subspaces} to the right-hand side, we obtain
\begin{align*}
 |E|&\leq |\pi_V(E)|^{\frac{1}{2}} q^n\Bigg(q^{-k} \sum_{m \in V}|\widehat{E}(m)|^2\Bigg)^{\frac{1}{2}}\\
 &\leq |\pi_V(E)|^{\frac{1}{2}} q^n\Bigg(q^{-k} \sum_{m \in V\setminus\{0\}}|\widehat{E}(m)|^2\Bigg)^{\frac{1}{2}}+|\pi_V(E)|^{\frac{1}{2}} q^{n-\frac{k}{2}} |\widehat{E}(0)|\\
 &\leq |\pi_V(E)|^{\frac{1}{2}} q^n\Bigg(q^{-k} \sum_{m \in V\setminus\{0\}}|\widehat{E}(m)|^p\Bigg)^{\frac{1}{p}}+|\pi_V(E)|^{\frac{1}{2}} q^{-\frac{k}{2}} |E| \\
  &\leq |\pi_V(E)|^{\frac{1}{2}} q^n\Bigg(q^{-k} \sum_{m \in \mathbb{F}_q^n\setminus\{0\}}|\widehat{E}(m)|^p\Bigg)^{\frac{1}{p}}+|\pi_V(E)|^{\frac{1}{2}} q^{-\frac{k}{2}} |E|  \\
    &= |\pi_V(E)|^{\frac{1}{2}} q^{n+\frac{n-k}{p}}\Bigg(q^{-n} \sum_{m \in \mathbb{F}_q^n\setminus\{0\}}|\widehat{E}(m)|^p\Bigg)^{\frac{1}{p}} +|\pi_V(E)|^{\frac{1}{2}} q^{-\frac{k}{2}} |E| \\
    &= |\pi_V(E)|^{\frac{1}{2}} q^{n+\frac{n-k}{p}}\|\widehat E \|_p +|\pi_V(E)|^{\frac{1}{2}} q^{-\frac{k}{2}} |E|.
\end{align*}
Since we assumed that $E$ is a $(p,s)$-Salem set, we have $\lVert \widehat{E}\rVert_p \leq Cq^{-n} |E|^{1-s}$ for some constant $C=C(p,s)$.  Therefore, 
\[
|E| \leq C|\pi_V(E)|^{\frac{1}{2}} q^{\frac{n-k}{p}} |E|^{1-s} +|\pi_V(E)|^{\frac{1}{2}} q^{-\frac{k}{2}} |E|.
\]
Rearranging, we obtain
\[
|\pi_V(E)|^{\frac{1}{2}} \geq \frac{1}{C  q^{\frac{n-k}{p}} |E|^{ -s} + q^{-\frac{k}{2}}  }  
\]
which also implies
\[
|\pi_V(E)|  \gtrsim \min\left\{\frac{|E|^{2s}}{q^{\frac{2(n-k)}{p}}}, q^{k}\right\},
\]
completing the proof.

\section{Another application: incidences in \texorpdfstring{$\mathbb{F}_q^n$}{Fqn} via Fourier analysis}
\subsection{Background and context}

In this section, we discuss affine incidence geometry in more detail and recall precise definitions of  the main objects.

\begin{defn}
Let $k,n\in \mathbb{N}_0$. For $0\leq k\leq n$, let $A(k,n)$ denote the set of all $k$-dimensional affine subspaces of $\mathbb{F}_q^n$, also referred to as $k$-planes.  More formally, we define
    \begin{equation*}
        A(k,n)\coloneqq \{x+V: x\in \mathbb{F}_q^n, V\in G(k,n)\}.
    \end{equation*}
\end{defn}
It is useful to note that $|A(k,n)| = q^{n-k}|G(k,n)|$ and that the elements of $A(k,n)$ form a partition of $\mathbb{F}_q^n$; see Section \ref{moment proof} ($r=0$) for a precise derivation of this.  Now, we can define the incidence function, which is the main object of study in this section.

\begin{defn}[Incidence function]
Let $k,n\in \mathbb{N}_0$, and let $E\subseteq \mathbb{F}_q^n$ and $\mathcal{A}\subseteq A(k,n)$. The incidence function between $E$ and $\mathcal{A}$ is defined as
\begin{equation}
\label{incidence function}
    I(E,\mathcal{A})\coloneqq |\{(p,A)\in E\times \mathcal{A}: p\in A\}|.
\end{equation}    
\end{defn}
In order to estimate $I(E,\mathcal{A})$, it will be useful to consider the following moment sums.  We establish explicit formulas for the 0th, 1st, and 2nd moments, which may be of independent interest (see Lemma \ref{0-th moment}).
\begin{defn}
Let $k,n\in \mathbb{N}_0$, and let $E\subseteq \mathbb{F}_q^n$ and $\mathcal{A}\subseteq A(k,n)$. For $r\in \mathbb{N}_0$, we define the $r$th moment of $E$ over $\mathcal{A}$ as
    \begin{equation}
    \label{equation moment}
        \mathcal{E}_r(E,\mathcal{A})\coloneqq \sum_{A\in \mathcal{A}}|E\cap A|^r.
    \end{equation}
\end{defn}

Before formulating the main results, we briefly discuss previous incidence results in both the continuous and finite field settings, as well as the connection between the introduced definitions.

The incidence function $I(E,\mathcal{A})$ defined in \eqref{incidence function} plays a fundamental role in incidence geometry. One of the most well-known results in this area is the Szemerédi–Trotter theorem \cite{szem}, which states that if $ E $ is a finite set of points and $\mathcal{L}$ is a finite set of lines, both in $\mathbb{R}^2$, then
\begin{equation*}
I( E ,\mathcal{L})\lesssim | E |^\frac{2}{3}|\mathcal{L}|^\frac{2}{3}+| E |+|\mathcal{L}|.    
\end{equation*}
Vinh \cite{vinh} established an analogue of the Szemerédi–Trotter theorem for points and lines in $\mathbb{F}_q^2$ using spectral graph theory. More precisely, he proved that if $ E $ is a collection of points and $\mathcal{L}$ is a collection of lines, both in $\mathbb{F}_q^2$, then
\begin{equation}
\label{vinh's result}
    I( E ,\mathcal{L})\leq \frac{| E ||\mathcal{L}|}{q}+\left(q| E ||\mathcal{L}|\right)^{\frac{1}{2}}.
\end{equation}
He also proved an analogue of \eqref{vinh's result} for point-hyperplane incidences in higher dimensions which is formulated as follows: if $ E $ is a collection of points in $\mathbb{F}_q^n$ and $\mathcal{H}$ is a collection of hyperplanes in $\mathbb{F}_q^n$ with $n\geq 2$, then
\begin{equation}
\label{vinh's higher dimensions}
    I( E ,\mathcal{H})\leq \frac{| E ||\mathcal{H}|}{q}+\left(q^{n-1}| E ||\mathcal{H}|\right)^{\frac{1}{2}}(1+o(1)).
\end{equation}
The next natural generalization of this would be to consider collections of affine $k$-planes for $1 \leq k \leq n-2$, but the literature here becomes a bit more complicated. The following result was obtained by Lund and Saraf \cite[Corollary 2]{lund} and, independently, by Bennett, Iosevich, and Pakianathan \cite[Theorem 2.3]{bennett} (in the arXiv preprint, \emph{not} the published version): if $ E $ is a collection of points in $\mathbb{F}_q^n$ and $\mathcal{A}$ is a collection of $k$-planes in $\mathbb{F}_q^n$ with $n>k \geq 1$, then
\begin{equation}
\label{vinh's higher dimensions for k}
    \left|I( E ,\mathcal{A}) -  \frac{| E ||\mathcal{A}|}{q^{n-k}} \right| \leq \left(q^{k(n-k)}| E ||\mathcal{A}|\right)^{\frac{1}{2}}(1+o(1)).
\end{equation}
Despite sustained attention on this problem over many years, it was surprisingly only recently rediscovered that \eqref{vinh's higher dimensions for k} actually goes back to the 1980s \cite{haemers}, see also \cite{dvirda}. The proofs in \cite{vinh,lund,bennett,haemers} all rely on spectral graph theory. 

The estimate \eqref{vinh's higher dimensions for k} is the current state of the art, although we note that there are some claimed improvements over this bound in the literature, such as \cite[Theorem 2.1]{bennett} (the published version). We observe below that this claimed improvement  is incorrect.\footnote{The fact that the published version of \cite[Theorem 2.1]{bennett} is incorrect is known to the authors of that paper (personal communication, A.~Iosevich), but we are not aware of this having been recorded in the literature.}

We next discuss the relationship between the moments $\mathcal{E}_r(E,\mathcal{A})$ and the incidence function $I(E,\mathcal{A})$. It is straightforward to observe that for every $A\in \mathcal{A}$, we have $I(E,\{A\})=|E\cap A|$. Consequently, equation \eqref{equation moment} represents the $r$th moment of the incidence function $I(E,\{A\})$ over all $A\in \mathcal{A}$. Murphy and Petridis \cite{murphy} computed the exact value of $\mathcal{E}_2(E, A(n-1,n))$, but their proof was purely combinatorial, relying on standard second-moment calculations. Chen \cite[Proposition 6.1]{chen} extended this result to all $1\leq k\leq n$ and gave a Fourier analytic proof in the case of prime $q$. In the following lemma, we record the exact value of $\mathcal{E}_2(E, A(k,n))$ for all $1\leq k\leq n-1$ and for vector spaces over arbitrary finite fields $\mathbb{F}_q$ (Lemma \ref{0-th moment}, identity \eqref{second moment}) as well as the 0th and 1st moments, which we will also need later. 

\begin{lma}
\label{0-th moment}
Let $k,n\in \mathbb{N}_0$ such that $0\leq k\leq n$. Then we have:
\begin{equation}
\label{zero moment}
    \mathcal{E}_0(E,A(k,n))=q^{n-k}\binom{n}{k}_q,
\end{equation}
\begin{equation}
\label{first moment}
    \mathcal{E}_1(E,A(k,n))=|E|\binom{n}{k}_q,
\end{equation}    
\begin{equation}
\label{second moment}
    \mathcal{E}_2(E,A(k,n))=|E|q^k\binom{n-1}{k}_q+|E|^2\binom{n-1}{k-1}_q.
\end{equation}    
\end{lma}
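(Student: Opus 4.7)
The plan is to compute each moment directly from the definition $\mathcal{E}_r(E,\mathcal{A})=\sum_{A\in\mathcal{A}}|E\cap A|^r$ with $\mathcal{A}=A(k,n)$, exchanging the order of summation and reducing to counting problems that can be resolved with the results of Lemma \ref{relation between G(k,n) and GBC}.

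For the $0$-th moment, the claim is simply a cardinality count: $\mathcal{E}_0(E,A(k,n))=|A(k,n)|$. I would observe that for each fixed $V\in G(k,n)$ the cosets $x+V$ partition $\mathbb{F}_q^n$ into $q^n/|V|=q^{n-k}$ distinct affine $k$-planes, and every element of $A(k,n)$ is such a coset for a unique $V$. Hence $|A(k,n)|=q^{n-k}|G(k,n)|=q^{n-k}\binom{n}{k}_q$ by \eqref{size of G(k,n)}, giving \eqref{zero moment}.

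For the $1$st moment, I would switch the order of summation:
\[
\mathcal{E}_1(E,A(k,n))=\sum_{A\in A(k,n)}\sum_{x\in E}\mathbf{1}_{x\in A}=\sum_{x\in E}|\{A\in A(k,n):x\in A\}|.
\]
For each $x\in\mathbb{F}_q^n$ and each $V\in G(k,n)$, exactly one coset of $V$ contains $x$ (namely $x+V$), so the number of affine $k$-planes through $x$ equals $|G(k,n)|=\binom{n}{k}_q$. This immediately yields \eqref{first moment}.

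For the $2$nd moment I would expand $|E\cap A|^2=\sum_{x,y\in E}\mathbf{1}_{x\in A}\mathbf{1}_{y\in A}$, swap sums, and split the resulting double sum into the diagonal contribution $x=y$ and the off-diagonal contribution $x\neq y$. The diagonal contributes $|E|\binom{n}{k}_q$ as in the $1$st moment. For $x\neq y$, an affine $k$-plane $x+V$ contains $y$ iff $y-x\in V$; since each such $V$ determines a unique coset through both points, the number of planes through a fixed pair $(x,y)$ with $x\neq y$ equals $|\{V\in G(k,n):y-x\in V\}|=\binom{n-1}{k-1}_q$ by \eqref{size of G(k,n) containing fixed z}. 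Summing gives
\[
\mathcal{E}_2(E,A(k,n))=|E|\binom{n}{k}_q+(|E|^2-|E|)\binom{n-1}{k-1}_q.
\]
The main (and only real) obstacle is to rewrite this in the stated form. The Pascal identity \eqref{1st Pascal's id} is not directly what is needed; instead I would apply the symmetry rule \eqref{symmetry rule for GBC} to convert $\binom{n}{k}_q$ and $\binom{n-1}{k-1}_q$ into $\binom{n}{n-k}_q$ and $\binom{n-1}{n-k}_q$, apply \eqref{1st Pascal's id} to $\binom{n}{n-k}_q$, and convert back. This yields the dual identity $\binom{n}{k}_q=\binom{n-1}{k-1}_q+q^{k}\binom{n-1}{k}_q$, so that $\binom{n}{k}_q-\binom{n-1}{k-1}_q=q^{k}\binom{n-1}{k}_q$. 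Substituting gives exactly \eqref{second moment} and completes the proof.
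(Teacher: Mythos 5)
Your computations of the zeroth and first moments coincide with the paper's. For the second moment, however, you take a genuinely different route. The paper applies Lemma \ref{Plancherel on subspaces} (Plancherel for subspaces, which in turn rests on the character-sum Lemma \ref{sum of characters}) to each inner sum $\sum_j|E\cap(V+t_{V,j})|^2$, splits off the $m=0$ frequency, counts $|\{V:m\in V^{\perp}\}|$ via \eqref{size of G(k,n):ort(V) contains z}, and then uses classical Plancherel \eqref{Plancherel's theorem for E} together with \eqref{1st Pascal's id}. You instead expand $|E\cap A|^2$ as a sum over pairs $(x,y)\in E\times E$, separate the diagonal from the off-diagonal, and count the affine $k$-planes through a fixed pair of distinct points as $\binom{n-1}{k-1}_q$ via \eqref{size of G(k,n) containing fixed z}; your closing algebra, using the dual Pascal identity $\binom{n}{k}_q=\binom{n-1}{k-1}_q+q^{k}\binom{n-1}{k}_q$ obtained from \eqref{symmetry rule for GBC} and \eqref{1st Pascal's id}, is correct. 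Your argument is essentially the Murphy--Petridis second-moment computation (which the paper cites as the purely combinatorial precursor for $k=n-1$) generalized to all $k$; it is entirely elementary and makes Lemma \ref{0-th moment}, and hence Theorem \ref{incidence theorem}, independent of the Fourier machinery of Lemmas \ref{sum of characters} and \ref{Plancherel on subspaces}. What the paper's route buys in exchange is uniformity: the same subspace-Plancherel identity drives both the projection theorems and the incidence bound, which is part of the point of the paper. One small caveat: your derivation of the dual Pascal identity invokes \eqref{1st Pascal's id} for $\binom{n}{n-k}_q$, which requires $n-k\geq 1$; the remaining case $k=n$ should be noted separately, though it is immediate since $A(n,n)=\{\mathbb{F}_q^n\}$ gives $\mathcal{E}_2=|E|^2$ directly.
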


We defer the proof of Lemma \ref{0-th moment} to Section \ref{moment proof}. 

\subsection{Main results on incidences}

We now state our main theorem concerning incidence geometry.  Although the theorem itself is not new, to the best of our knowledge, our proof is novel. In particular, it avoids the use of spectral graph theory. Instead, our approach is direct, relying on elementary combinatorics and discrete Fourier analysis.  The proof is based on Lemma \ref{0-th moment}, which in turn depends on Lemma \ref{sum of characters} and Corollary \ref{Plancherel on subspaces}.

\begin{thm}
\label{incidence theorem}
Let $k,n\in \mathbb{N}_0$. If $E\subseteq\mathbb{F}_q^n$ and $\mathcal{A}\subseteq A(k,n)$, then
\begin{equation}
\label{main incidence bound}
    \left|I(E,\mathcal{A})-\frac{|E||\mathcal{A}|}{q^{n-k}}\right|\leq \left(q^{k(n-k)}| E ||\mathcal{A}|\right)^{\frac{1}{2}} \left(\frac{\binom{n-1}{k}_q\left(1-q^{-n}|E|\right)}{q^{k(n-k-1)}}\right)^{\frac{1}{2}},
\end{equation}
where we note that the fraction inside the larger brackets is bounded above by $1+o(1)$.
\end{thm}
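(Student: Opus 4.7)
The plan is to prove Theorem \ref{incidence theorem} by the standard variance / second moment method, where the second moment of $|E \cap A|$ over all $A \in A(k,n)$ has just been computed in Lemma \ref{0-th moment}. The Cauchy--Schwarz inequality reduces the problem to a purely algebraic manipulation of Gaussian binomial coefficients, and Pascal's rule \eqref{1st Pascal's id} produces the cancellation that yields the claimed bound. Importantly, I can work with the sum over all of $A(k,n)$ rather than just $\mathcal{A}$, since enlarging the set of $k$-planes only makes the variance-type sum larger.

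First I would write
\[
I(E,\mathcal{A}) - \frac{|E||\mathcal{A}|}{q^{n-k}} = \sum_{A \in \mathcal{A}} \left(|E \cap A| - \frac{|E|}{q^{n-k}}\right),
\]
apply the Cauchy--Schwarz inequality to the right-hand side, and then enlarge $\mathcal{A}$ to all of $A(k,n)$ to obtain
\[
\left|I(E,\mathcal{A}) - \frac{|E||\mathcal{A}|}{q^{n-k}}\right|^2 \le |\mathcal{A}| \sum_{A \in A(k,n)} \left(|E \cap A| - \frac{|E|}{q^{n-k}}\right)^2.
\]
Expanding the square, the right-hand sum becomes
\[
\mathcal{E}_2(E,A(k,n)) - \frac{2|E|}{q^{n-k}}\,\mathcal{E}_1(E,A(k,n)) + \frac{|E|^2}{q^{2(n-k)}}\,\mathcal{E}_0(E,A(k,n)).
\]

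Next I would substitute the three moment formulas from Lemma \ref{0-th moment}. The $\mathcal{E}_0$ and $\mathcal{E}_1$ contributions both end up proportional to $|E|^2\binom{n}{k}_q/q^{n-k}$, and combining them with the coefficient $-2+1 = -1$ leaves
\[
|E|q^{k}\binom{n-1}{k}_q + |E|^2 \binom{n-1}{k-1}_q - \frac{|E|^2}{q^{n-k}}\binom{n}{k}_q.
\]
Applying Pascal's rule $\binom{n}{k}_q = \binom{n-1}{k}_q + q^{n-k}\binom{n-1}{k-1}_q$ to the last term, the $\binom{n-1}{k-1}_q$ contributions cancel exactly, and one is left with the clean expression
\[
\binom{n-1}{k}_q \cdot |E|q^{k}\bigl(1 - q^{-n}|E|\bigr).
\]

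Finally I would re-insert the factor $|\mathcal{A}|$ from the Cauchy--Schwarz step and rewrite the result in the form stated in \eqref{main incidence bound}, using the trivial identity $q^{k} = q^{k(n-k)}/q^{k(n-k-1)}$ to produce the $q^{k(n-k)}|E||\mathcal{A}|$ factor that appears outside the square brackets. Taking square roots gives the theorem. The ``$1+o(1)$'' remark at the end follows immediately from the asymptotic \eqref{asymptotics of GBC}, which yields $\binom{n-1}{k}_q/q^{k(n-k-1)} = 1 + O(q^{-1})$, together with the bound $1 - q^{-n}|E| \le 1$. The main obstacle here is essentially bookkeeping: making sure the exponents of $q$ and the indices on the Gaussian binomials line up so that Pascal's rule produces the intended cancellation; there is no substantive analytic difficulty, since the Fourier-analytic content has already been absorbed into Lemma \ref{0-th moment} via Lemmas \ref{sum of characters} and \ref{Plancherel on subspaces}.
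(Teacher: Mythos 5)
Your proposal is correct and follows essentially the same route as the paper's proof: the same centred decomposition, Cauchy--Schwarz followed by enlarging $\mathcal{A}$ to all of $A(k,n)$, substitution of the moment identities from Lemma \ref{0-th moment}, and Pascal's rule \eqref{1st Pascal's id} to cancel the $\binom{n-1}{k-1}_q$ terms and arrive at $\binom{n-1}{k}_q\,|E|q^{k}\bigl(1-q^{-n}|E|\bigr)$. The final rewriting via $q^{k}=q^{k(n-k)}/q^{k(n-k-1)}$ and the $1+o(1)$ remark via \eqref{asymptotics of GBC} also match the paper.
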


We defer the proof of Theorem \ref{incidence theorem} to Section \ref{incidence proof}. Several useful consequences follow from Theorem \ref{incidence theorem}. First, note that this theorem quantifies the $(1+o(1))$ factor appearing in \eqref{vinh's higher dimensions for k}. However, we stress that this is \emph{not} new and, in fact, one can recover the same factor by following the proof in \cite{lund} or by consulting \cite{haemers}.  One of the first questions we asked ourselves was whether the $(1+o(1))$ factor is actually necessary, since it does not appear in Vinh's estimate \eqref{vinh's result} in the case $n=2$.  The next corollary identifies sufficient conditions for this factor to be removed.  It recovers Vinh's result  \eqref{vinh's result} and improves his point-hyperplane bound \eqref{vinh's higher dimensions} by removing the $(1+o(1))$ factor.

\begin{cor}
\label{cor 1}
Let $n\geq 2$. If either $k=n-1$ or $ q^{n-1}=o(|E|)$ and $q$ is large enough, then 
\[
    \left|I(E,\mathcal{A})-\frac{|E||\mathcal{A}|}{q^{n-k}}\right|\leq \left(q^{k(n-k)}| E ||\mathcal{A}|\right)^{\frac{1}{2}}.
\]
\end{cor}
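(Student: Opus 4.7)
The plan is to deduce Corollary \ref{cor 1} directly from Theorem \ref{incidence theorem} by showing that, in each of the two cases, the auxiliary factor
\[
F := \frac{\binom{n-1}{k}_q\bigl(1 - q^{-n}|E|\bigr)}{q^{k(n-k-1)}}
\]
appearing on the right-hand side of \eqref{main incidence bound} satisfies $F \leq 1$. Once this is established, substituting $F \leq 1$ into \eqref{main incidence bound} immediately yields the claimed estimate.

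For the first case $k = n-1$, I would observe that the exponent $k(n-k-1) = (n-1)\cdot 0 = 0$, so the denominator of $F$ equals $1$. By the symmetry rule \eqref{symmetry rule for GBC},
\[
\binom{n-1}{n-1}_q = \binom{n-1}{0}_q = 1,
\]
and hence $F = 1 - q^{-n}|E| \leq 1$ trivially.

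For the second case, I would appeal to the asymptotic relation \eqref{asymptotics of GBC}, which provides a constant $C = C(n,k) > 0$ such that, for all sufficiently large $q$,
\[
\binom{n-1}{k}_q \leq q^{k(n-k-1)}\Bigl(1 + \tfrac{C}{q}\Bigr),
\]
where I have used $n-1-k = n-k-1$. Substituting into $F$ and expanding gives
\[
F \leq \Bigl(1 + \tfrac{C}{q}\Bigr)\bigl(1 - q^{-n}|E|\bigr) = 1 + \tfrac{C}{q} - q^{-n}|E| - C q^{-n-1}|E|.
\]
Discarding the last (non-positive) term, the inequality $F \leq 1$ reduces to $Cq^{n-1} \leq |E|$, which follows at once from the assumption $q^{n-1} = o(|E|)$ once $q$ is taken large enough.

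There is no serious obstacle: the corollary is a bookkeeping exercise on top of Theorem \ref{incidence theorem}. The only mild point requiring care is lining up the implicit constant in the $O(q^{-1})$ term of \eqref{asymptotics of GBC} with the asymptotic hypothesis $q^{n-1} = o(|E|)$, which is precisely the quantitative threshold needed to absorb that slack and conclude $F \leq 1$.
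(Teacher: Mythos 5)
Your proposal is correct and follows essentially the same route as the paper: in both cases you show the auxiliary factor in Theorem \ref{incidence theorem} is at most $1$, using $\binom{n-1}{n-1}_q=1$ when $k=n-1$ and the asymptotic $\binom{n-1}{k}_q=q^{k(n-k-1)}(1+O(q^{-1}))$ together with $q^{n-1}=o(|E|)$ in the second case. The paper's argument is just a terser version of the same bookkeeping.
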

\begin{proof}[Proof of Corollary \ref{cor 1}]
    If $k=n-1$, the result follows immediately since $\binom{n-1}{n-1}_q=1$. On the other hand, regardless of the value of $k$, if $ q^{n-1}=o(|E|)$, then 
\begin{align*}
\frac{\binom{n-1}{k}_q\left(1-q^{-n}|E|\right)}{q^{k(n-k-1)}}=(1+O(q^{-1}))(1-q^{-n}|E|) \leq 1
\end{align*}
for sufficiently large $q$.
\end{proof}
Finally, we state a corollary that explicitly describes how the bounds for $I(E, \mathcal{A})$ in Theorem \ref{incidence theorem} depend on the relative size of $|E||\mathcal{A}|$. It is worth noting that $|E|$ can be as large as $q^n$, while $|\mathcal{A}|$ can be as large as $ q^{(k+1)(n-k)}$.

\begin{cor}
\label{cor 2}
Let $n\geq 2$,  $1\leq k\leq n-1$,  $E\subseteq\mathbb{F}_q^n$, and $\mathcal{A}\subseteq A(k,n)$.
\begin{enumerate}
    \item If $|E||\mathcal{A}| \geq c q^{(k+2)(n-k)}$ for some $c>1$, then 
    \begin{equation}
    \label{size of EA is large}
    I(E,\mathcal{A})\approx \frac{|E||\mathcal{A}|}{q^{n-k}}    
    \end{equation}
     for sufficiently large $q$.
    \item If $c' q^{k(n-k)}<|E||\mathcal{A}|\leq  q^{(k+2)(n-k)}$ for some $c'>4$, then 
    \begin{equation}
    \label{size of EA is medium}
    I(E,\mathcal{A})\leq (2+o(1)) q^{^{\frac{k(n-k)}{2}}}|E|^{\frac{1}{2}}|\mathcal{A}|^{\frac{1}{2}},        \end{equation}
    which improves upon the trivial bound for sufficiently large $q$.
    \item If $|E||\mathcal{A}| \leq 4 q^{k(n-k)}$, then we cannot improve upon the trivial bound: 
    \begin{equation}
    \label{size of EA is small}
    I(E,\mathcal{A})\leq |E||\mathcal{A}|.        \end{equation}
\end{enumerate}
\end{cor}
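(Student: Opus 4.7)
The plan is to apply Theorem \ref{incidence theorem} and then compare the ``main term'' $M:=|E||\mathcal{A}|/q^{n-k}$ against the ``error term'' $\mathcal{E}:=(q^{k(n-k)}|E||\mathcal{A}|)^{1/2}$ across the three regimes distinguished in the statement. As a preliminary reduction, I would use \eqref{asymptotics of GBC} to write $\binom{n-1}{k}_q = q^{k(n-k-1)}(1+O(q^{-1}))$, and use $1 - q^{-n}|E| \in [0,1]$, so that \eqref{main incidence bound} simplifies to the cleaner estimate $|I(E,\mathcal{A}) - M| \leq (1+o(1))\mathcal{E}$. A direct computation gives $\mathcal{E}/M = q^{(k+2)(n-k)/2}(|E||\mathcal{A}|)^{-1/2}$, so $\mathcal{E}$ and $M$ coincide precisely at the threshold $|E||\mathcal{A}| = q^{(k+2)(n-k)}$ that separates parts (1) and (2).

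For part (1), the hypothesis $|E||\mathcal{A}| \geq cq^{(k+2)(n-k)}$ with $c>1$ yields $\mathcal{E}/M \leq 1/\sqrt{c}<1$, so for $q$ sufficiently large the triangle inequality gives $(1-\lambda)M \leq I(E,\mathcal{A}) \leq (1+\lambda)M$ for some $\lambda<1$, which is exactly \eqref{size of EA is large}. For part (2), the upper bound $|E||\mathcal{A}| \leq q^{(k+2)(n-k)}$ reverses this inequality to $M \leq \mathcal{E}$, so $I(E,\mathcal{A}) \leq M + (1+o(1))\mathcal{E} \leq (2+o(1))\mathcal{E}$, which is \eqref{size of EA is medium}. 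The further hypothesis $|E||\mathcal{A}| > c'q^{k(n-k)}$ with $c'>4$ is precisely what is needed for this bound to beat the trivial bound, since $(2+o(1))\mathcal{E} < |E||\mathcal{A}|$ rearranges to $|E||\mathcal{A}| > (4+o(1))q^{k(n-k)}$, which holds for $q$ large by the strict inequality $c'>4$.

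Part (3) is essentially a tautology: $I(E,\mathcal{A}) \leq |E||\mathcal{A}|$ holds by counting pairs in $E \times \mathcal{A}$ (each contributes at most one incidence), and the qualifier ``we cannot improve'' simply reflects that the bound \eqref{size of EA is medium} from part (2) becomes vacuous in this regime --- that is, $(2+o(1))\mathcal{E} \geq |E||\mathcal{A}|$ whenever $|E||\mathcal{A}| \leq 4q^{k(n-k)}(1+o(1))$. There is no genuine obstacle in any part; the entire proof amounts to careful bookkeeping of the two terms in Theorem \ref{incidence theorem}, with the mild subtlety that the strict constants $c>1$ and $c'>4$ in the hypotheses are there precisely to absorb the $(1+o(1))$ factors for large $q$.
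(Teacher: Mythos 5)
Your proposal is correct and follows essentially the same route as the paper: both apply the two-sided bound from Theorem \ref{incidence theorem} (with the bracketed factor bounded by $1+o(1)$) and compare the main term $|E||\mathcal{A}|/q^{n-k}$ against the error term $(q^{k(n-k)}|E||\mathcal{A}|)^{1/2}$ in each regime, using the strict constants $c>1$ and $c'>4$ to absorb the $o(1)$ factors for large $q$. No gaps.
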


\begin{proof}[Proof of Corollary \ref{cor 2}]
   Theorem \ref{incidence theorem} gives:
    \begin{equation}
    \label{upper and lower bound for incidence function}
        \frac{|E||\mathcal{A}|}{q^{n-k}}-(1+o(1))q^{^{\frac{k(n-k)}{2}}}|E|^{\frac{1}{2}}|\mathcal{A}|^{\frac{1}{2}} \leq I(E,\mathcal{A})\leq \frac{|E||\mathcal{A}|}{q^{n-k}}+(1+o(1))q^{^{\frac{k(n-k)}{2}}}|E|^{\frac{1}{2}}|\mathcal{A}|^{\frac{1}{2}}.
    \end{equation}

    \begin{enumerate}
        \item If $|E||\mathcal{A}|\geq cq^{(k+2)(n-k)}$ for some $c>1$, then it is easy to verify that
        \begin{equation*}
            \left(1-\frac{1+o(1)}{\sqrt{c}} \right)\frac{|E||\mathcal{A}|}{q^{n-k}}\leq I(E,\mathcal{A})\leq \left(1+\frac{1+o(1)}{\sqrt{c}} \right)\frac{|E||\mathcal{A}|}{q^{n-k}},
        \end{equation*}
        which implies \eqref{size of EA is large} since $c>1$.
        \item If $c'q^{k(n-k)}< |E||\mathcal{A}|\leq q^{(k+2)(n-k)}$ for some $c'>4$, then it is easy to verify that 
        \begin{equation*}
        I(E,\mathcal{A})\leq (2+o(1)) q^{\frac{k(n-k)}{2}}|E|^{\frac{1}{2}}|\mathcal{A}|^{\frac{1}{2}}  ,  
        \end{equation*}
        and one can check that the right-hand side of the above inequality provides a stronger bound than the trivial estimate for $I(E,\mathcal{A})$, which is $|E||\mathcal{A}|$. This establishes \eqref{size of EA is medium}. 
        \item There is nothing to prove in this case. \qedhere
    \end{enumerate}
\end{proof}

\subsection{Sharpness examples}

In this section, we demonstrate that Corollary \ref{cor 2}, and consequently Theorem \ref{incidence theorem}, is  sharp in the sense that, up to constants, one cannot improve the bounds for $I(E,\mathcal{A})$ as a function of the product $|E| |\mathcal{A}|$. First, observe that the result in case (1) of Corollary \ref{cor 2} is immediately seen to be sharp up to constants, as it provides an explicit formula for the number of incidences (up to constants).  This marks a notable distinction between incidence geometry in finite fields and in Euclidean space, where there is always enough room to position points and planes in such a way that they do not intersect at all, thereby prohibiting nontrivial lower bounds on the incidence function in general.

The fact that Theorem \ref{incidence theorem} is sharp regrettably demonstrates that certain  incidence results of this type appearing in the literature are incorrect.  In particular, \cite[Theorem 2.1]{bennett} claims that 
\[
I(E,\mathcal{A})\leq \frac{|E||\mathcal{A}|}{q^{n-k}}+\left(q^{n+k-2}|E||\mathcal{A}|\right)^{\frac{1}{2}} (1+o(1))
\]
when $k<n-1$.  We show that this  estimate is not generally true whenever $n+k-2<k(n-k)$, and in particular, whenever $n \geq 5$ and $2 \leq k \leq n-3$.  Indeed, the estimate already fails for $n=5,k=2$ in the simple case where $E=\{0\}$ is a single point and $\mathcal{A}$ consists of all 2-dimensional linear subspaces, i.e., $\mathcal{A}=G(2,5)$.  The number of incidences is at least $q^{6}$, whereas \cite[Theorem 2.1]{bennett} predicts that this number can be bounded by a constant multiple of $q^{\frac{11}{2}}$.

\subsubsection{Examples with few incidences}

Consider the threshold between cases (1) and (2) in Corollary \ref{cor 2}. Let $E \subseteq \mathbb{F}_q^n$ be a set with
\begin{equation*}
   \frac{q^{n-k}}{2} < |E| \leq  \frac{q^{n-k}}{2}    +1
\end{equation*}
and define
\[
\mathcal{A} \coloneqq \{A \in A(k,n) : A \cap E = \varnothing\}.
\]
Then $I(E,\mathcal{A})=0$ by construction. Since each point $x \in \mathbb{F}_q^n$ can intersect at most $|G(k,n)|$ elements $A \in A(k,n)$, we obtain
\begin{align*}
|\mathcal{A}| &\geq |A(k,n)|-|E||G(k,n)| \\
&\geq  |G(k,n)|\left(q^{n-k}-\frac{q^{n-k}}{2}-1\right) \\
&\geq \frac{q^{(k+1)(n-k)}}{2} - q^{k(n-k)}.
\end{align*}
Therefore,
\begin{align*}
|E||\mathcal{A}| &>  \frac{q^{n-k}}{2} \left(\frac{q^{(k+1)(n-k)}}{2} - q^{k(n-k)}\right) \\
&= \frac{q^{(k+2)(n-k)}}{4} - \frac{q^{(k+1)(n-k)}}{2}.
\end{align*}
In particular, the condition $|E||\mathcal{A}| \geq c q^{(k+2)(n-k)}$ with $c<\frac{1}{4}$ is \emph{not} sufficient to ensure a nontrivial lower bound for the incidence function.  This demonstrates that our threshold $q^{(k+2)(n-k)}$ from Corollary \ref{cor 2} is sharp up to constant factors.

Again considering the threshold between cases (1) and (2), we provide another sharp example in the case $n=2$, which slightly outperforms the previous example. Let $k=1$, and let $K \subseteq \mathbb{F}_q^n$ be a Kakeya set with $|K| \sim 2^{1-n} q^n$.  (Such ``small Kakeya sets'' are well-known to exist; see \cite{dvir2}.) Define $E=\mathbb{F}_q^n \setminus K$, and let $\mathcal{A}$ be a collection of lines inside $K$ of size $|G(1,n)| \sim q^{n-1}$ (such a collection exists by the definition of a Kakeya set).  Then
\begin{equation*}
|E||\mathcal{A}| \sim  (1-2^{1-n})q^{2n-1},
\end{equation*}
but $I(E,\mathcal{A})=0$. In particular, the condition $|E||\mathcal{A}| \geq c  (1-2^{1-n})q^{2n-1}$ with $0<c<1$ is \emph{not} sufficient to ensure a  nontrivial lower bound for the incidence function. When $n=2$, this matches our threshold $|E||\mathcal{A}| =   q^{3(n-1)}$ up to a constant factor of 2. However, for $n>2$, it does not reach the threshold.

\subsubsection{Examples with many incidences}

We have so far demonstrated that the estimates in case (1) are sharp and that the threshold between cases (1) and (2) is sharp.  It remains to show that the estimate in case (2) is also sharp; that is, for all values of $|E||\mathcal{A}|$ in the relevant range, we can construct suitable sets $E$ and $\mathcal{A}$ where the upper bound is attained (up to constants).  

Let $E \subseteq \mathbb{F}_q^n$ satisfy $1 \leq |E| \leq q^{n-k}$. Define $\mathcal{A} \subseteq A(k,n)$ by
\[
\mathcal{A}= \{ x+V : x \in E, \, V \in G(k,n)\}.
\]
Then 
\begin{equation} \label{iea}
I(E, \mathcal{A}) \geq |E| |G(k,n)| \geq |E| q^{k(n-k)}
\end{equation}
and
\[
 |\mathcal{A}| \leq |E| |G(k,n)| =  |E|q^{k(n-k)}(1 +o(1)).
\]
By Theorem \ref{incidence theorem}, we obtain
\begin{align*}
I(E, \mathcal{A}) &\leq \frac{|E| |\mathcal{A}|}{q^{n-k}}+ \  q^{^{\frac{k(n-k)}{2}}}|E|^{\frac{1}{2}}|\mathcal{A}|^{\frac{1}{2}}(1 +o(1))\\
&\leq |E|q^{k(n-k)} \left( |E|q^{k-n}+ 1  \right)(1 +o(1))\\
&\leq |E|q^{k(n-k)} (2 +o(1)).
\end{align*}
Comparing with \eqref{iea} we see that this bound is attained up to a factor of $(2+o(1))$. Incidentally, this also shows that
\[
 |\mathcal{A}|  \geq  \frac{|E|q^{k(n-k)}}{4+o(1)},
\]
so we can choose $E$ such that  $|E||\mathcal{A}| \approx q^\alpha$ for any $\alpha$ satisfying 
\[
 k(n-k) \leq \alpha \leq  (k+2)(n-k),
\]
as desired.  Furthermore, in the case $|E| =o(q^{n-k})$, the estimate from Theorem \ref{incidence theorem} gives
\[
I(E, \mathcal{A}) \leq |E|q^{k(n-k)} ( 1+o(1))(1  +o(1)) \sim |E|q^{k(n-k)},
\]
which confirms that Theorem \ref{incidence theorem} provides the correct asymptotic value for the number of incidences.

\section{Incidence geometry: proof of Theorem \ref{incidence theorem}} \label{incidence proof}

\subsection{Moment identities: proof of Lemma \ref{0-th moment}} \label{moment proof}

\begin{enumerate}
   \item[$(r=0)$] By the definition of $\mathcal{E}_r$ for $r=0$ in $\eqref{equation moment}$, we have:
    \begin{equation*}
        \mathcal{E}_0(E,A(k,n))=\sum_{A\in A(k,n)}1=|A(k,n)|,
    \end{equation*}
    thus, it suffices to determine the cardinality of $A(k,n)$. If $V\in G(k,n)$, then $V$ has $q^{n-k}$ distinct cosets in $\mathbb{F}_q^n$. Suppose these cosets are given by $\{V+t_{V,j}:1\leq j\leq q^{n-k}\}$. Thus we have
    \begin{equation}
    \label{precise description of A(k,n)}
        A(k,n)=\{V+t_{V,j}: 1\leq j\leq q^{n-k}, V\in G(k,n)\}.
    \end{equation}
    We will make the following simple observation: if $(V,i)\neq (W,j)$, then $V+t_{V,i}\neq W+t_{W,j}$ for $1\leq i,j\leq q^{n-k}$ and $V,W\in G(k,n)$. Indeed, suppose that $V=W$. Then $i\neq j$, and hence
    $V+t_{V,i}\neq V+t_{V,j}$ because these cosets are distinct as $i\neq j$. Now suppose that $V\neq W$ but $V+t_{V,i}=W+t_{W,j}$. Then $V=W+\mathbf{x}$ for some $\mathbf{x}\in \mathbb{F}_q^n$. This implies that $\mathbf{x}=-\mathbf{w}$ for some $\mathbf{w}\in W$. Therefore, $V=W+(-\mathbf{w})=W$, which is a contradiction since we assumed that $V\neq W$.    This observation immediately implies that
    \begin{equation*}
        |A(k,n)|=q^{n-k} |G(k,n)|=q^{n-k} \binom{n}{k}_q,
    \end{equation*}
    which completes the proof of \eqref{zero moment}.\\

\item[$(r=1)$] By the definition of $\mathcal{E}_r$ for $r=1$ in $\eqref{equation moment}$, we have:
    \begin{equation*}
        \mathcal{E}_1(E,A(k,n))=\sum_{A\in A(k,n)}|E\cap A|.
    \end{equation*}    
Writing $E(x)$ and $A(x)$ for the indicator functions of $E$ and $A$, and changing the order of summation, we obtain:
\begin{align*}
    \sum_{A\in A(k,n)}|E\cap A|&=\sum_{A\in A(k,n)}\sum_{x\in A}E(x) =\sum_{x\in E}\sum_{A\in A(k,n)}A(x).
\end{align*}
Now, using \eqref{precise description of A(k,n)}, for every $x\in \mathbb{F}_q^n$, we have:
\begin{align*}
    \sum_{A\in A(k,n)}A(x)&=\sum_{V\in G(k,n)}\sum_{\substack{j=1 \\ x\in V+t_{V,j}}}^{q^{n-k}}1 =\sum_{V\in G(k,n)}1=\binom{n}{k}_q.
\end{align*}
Therefore, we obtain:
\begin{align*}
    \mathcal{E}_1(E,A(k,n))=|E|\binom{n}{k}_q,
\end{align*}
which completes the proof of \eqref{first moment}.\\

\item[$(r=2)$] This case follows Chen's Fourier analytic proof, which was applied in the case of prime $q$ \cite{chen}. By the definition of $\mathcal{E}_r$ for $r=2$ in \eqref{equation moment} and \eqref{precise description of A(k,n)}, we have:
\begin{align}
    \mathcal{E}_2(E,A(k,n))&=\sum_{A\in A(k,n)}|E\cap A|^2  =\sum_{V\in G(k,n)}\sum_{j=1}^{q^{n-k}}|E\cap (V+t_{V,j})|^2 \label{double sum for 2nd moment} .
\end{align}
Applying Corollary \ref{Plancherel on subspaces} to \eqref{double sum for 2nd moment}, where $n-k$ should be replaced by $k$, we obtain:
\begin{align}
    \mathcal{E}_2(E,A(k,n))&=q^{n+k}\sum_{V\in G(k,n)}\sum_{m\in V^{\perp}}|\widehat{E}(m)|^2 \nonumber \\
    &=q^{n+k}\left(\sum_{V\in G(k,n)}\sum_{m\in V^{\perp}\setminus \{0\}}|\widehat{E}(m)|^2+|G(k,n)|\cdot|\widehat{E}(0)|^2\right) \nonumber \\
    &=q^{n+k}\left(\sum_{m\in \mathbb{F}_q^n\setminus \{0\}}|\widehat{E}(m)|^2\cdot |\{V\in G(k,n):m\in V^{\perp}\}|+|E|^2q^{-2n}\binom{n}{k}_q \right) \nonumber \\
    &=q^{n+k}\left(\binom{n-1}{k}_q\sum_{m\in \mathbb{F}_q^n\setminus \{0\}}|\widehat{E}(m)|^2+|E|^2q^{-2n}\binom{n}{k}_q \right) \label{app of Gauss 1},
\end{align}
where in \eqref{app of Gauss 1}, we have used \eqref{size of G(k,n):ort(V) contains z}. Moreover, using \eqref{Plancherel's theorem for E}, we have:
\begin{align*}
    \mathcal{E}_2(E,A(k,n))&=q^{n+k}\left( \binom{n-1}{k}_q (|E|q^{-n}-|E|^2q^{-2n})+|E|^2q^{-2n}\binom{n}{k}_q\right)\\
    &=|E|q^k \binom{n-1}{k}_q+|E|^2q^{-n+k} \left(\binom{n}{k}_q-\binom{n-1}{k}_q\right)\\
    &=|E|q^k \binom{n-1}{k}_q+|E|^2\binom{n-1}{k-1}_q, 
\end{align*}
where in the final line, we have used \eqref{1st Pascal's id}, thereby proving the desired identity.
\end{enumerate}

\subsection{Proof of Theorem \ref{incidence theorem}} 

By applying the triangle inequality and the Cauchy-Schwarz inequality, we obtain
\begin{align}
    \left|I(E,\mathcal{A})-\frac{|E||\mathcal{A}|}{q^{n-k}}\right|&=\left|\sum_{A\in \mathcal{A}}\left(|E\cap A|-\frac{|E|}{q^{n-k}}\right) \right| \nonumber \leq \sum_{A\in \mathcal{A}}\left||E\cap A|-\frac{|E|}{q^{n-k}}\right| \nonumber \\
    &\leq |\mathcal{A}|^{\frac{1}{2}}\left(\sum_{A\in \mathcal{A}}\left(|E\cap A|-\frac{|E|}{q^{n-k}}\right)^2\right)^{\frac{1}{2}} \nonumber \\
    &\leq |\mathcal{A}|^{\frac{1}{2}}\left(\sum_{A\in A(k,n)}\left(|E\cap A|-\frac{|E|}{q^{n-k}}\right)^2\right)^{\frac{1}{2}} \label{ineq for incidence}.
\end{align}

The sum in \eqref{ineq for incidence} can be expressed in terms of $\mathcal{E}_r(E,A(k,n))$ for $r\in \{0,1,2\}$ as follows:
\begin{align*}
&\sum_{A\in A(k,n)}\left(|E\cap A|-\frac{|E|}{q^{n-k}}\right)^2\\
&=\sum_{A\in A(k,n)}|E\cap A|^2-\frac{2|E|}{q^{n-k}}\sum_{A\in A(k,n)}|E\cap A|+\frac{|E|^2}{q^{2(n-k)}}\sum_{A\in A(k,n)}1\\
&=\mathcal{E}_2(E,A(k,n))-\frac{2|E|}{q^{n-k}}\mathcal{E}_1(E,A(k,n))+\frac{|E|^2}{q^{2(n-k)}}\mathcal{E}_0(E,A(k,n)).
\end{align*}

Substituting the values of $\mathcal{E}_r(E,A(k,n))$ for $r\in \{0,1,2\}$ from Lemma \ref{0-th moment}, we obtain:
\begin{align}
\sum_{A\in A(k,n)}  \left(|E\cap A| -\frac{|E|}{q^{n-k}}\right)^2&=|E|q^k \binom{n-1}{k}_q+|E|^2\binom{n-1}{k-1}_q-\frac{|E|^2}{q^{n-k}}\binom{n}{k}_q \nonumber \\
&=|E|q^k \binom{n-1}{k}_q+|E|^2q^{k-n}\left(q^{n-k}\binom{n-1}{k-1}_q-\binom{n}{k}_q\right) \nonumber \\
&=|E|q^k \binom{n-1}{k}_q-|E|^2q^{k-n}\binom{n-1}{k}_q, \label{upper bound for 2nd moment of deviation}
\end{align}
where \eqref{1st Pascal's id} has been used in the last line.  Combining \eqref{ineq for incidence} with \eqref{upper bound for 2nd moment of deviation} and performing some arithmetic operations, we obtain the following inequality:
\begin{align*}
    \left|I(E,\mathcal{A})-\frac{|E||\mathcal{A}|}{q^{n-k}}\right|\leq\left(q^{k(n-k)}| E ||\mathcal{A}|\right)^{\frac{1}{2}} \left(\frac{\binom{n-1}{k}_q\left(1-q^{-n}|E|\right)}{q^{k(n-k-1)}}\right)^{\frac{1}{2}},
\end{align*}
which establishes \eqref{main incidence bound}.

\section{Bringing it all back home: from incidences back to projections} \label{biabh}

In this final section, we come full circle and reconnect our work on incidences with our work on projections.  In fact, we provide an alternative and direct proof of Corollary \ref{main cor} as a simple application of Theorem \ref{incidence theorem}.  As a result,  we now have four  proofs of the finite field analogue of Marstrand's projection theorem \eqref{marstrandfinite}, albeit with some substantial overlaps:  
\begin{enumerate}
\item the special case of \cite[Theorem 1.7]{bright}, which yields \cite[(13)]{bright};
    \item setting $p=2$ in Theorem \ref{main projection theorem};
    \item applying Lemma \ref{sum of characters} together with the argument of Chen \cite{chen};
    \item the proof we present below, which is based on incidence geometry.
\end{enumerate}

\begin{cor} 
\label{projfromit}
Let $E\subseteq \mathbb{F}_q^n$ be a nonempty set and $0<u\leq\frac{q^k}{2}$. Then
\begin{equation}
\label{discrete Marstrand new}
    |\{V\in G(k,n): |\pi_V(E)|\leq u\}|\leq (4+o(1)) uq^{k(n-k)}|E|^{-1}.
\end{equation}
\end{cor}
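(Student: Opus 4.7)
The plan is to extract from each exceptional $V$ a small collection of affine $(n-k)$-planes that together cover $E$, and then feed the resulting incidence count into Theorem \ref{incidence theorem}. Set $\Theta:=\{V\in G(k,n): |\pi_V(E)|\leq u\}$, which we may assume is nonempty. For each $V\in \Theta$ the fibers of $\pi_V$ are the cosets of $V^\perp\in G(n-k,n)$, and by Definition \ref{projdef} exactly $|\pi_V(E)|\leq u$ of them meet $E$. Define
\[
\mathcal{A}:=\bigcup_{V\in\Theta}\bigl\{V^\perp+t \,:\, (V^\perp+t)\cap E\neq\varnothing\bigr\}\subseteq A(n-k,n).
\]
The crucial observation is that any coset $V^\perp+t$ determines $V^\perp$ (by taking differences), hence determines $V$, so the union defining $\mathcal{A}$ is in fact a disjoint union, giving $|\mathcal{A}|\leq u\,|\Theta|$.

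Next, I would compute the incidence count $I(E,\mathcal{A})$ exactly. For a fixed $V\in\Theta$, the cosets of $V^\perp$ appearing in $\mathcal{A}$ are precisely those intersecting $E$, and since the cosets of $V^\perp$ partition $\mathbb{F}_q^n$, they partition $E$ as well. Summing over $V\in\Theta$ yields $I(E,\mathcal{A})=|E|\,|\Theta|$.

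Now apply Theorem \ref{incidence theorem} with the role of $k$ played by $n-k$ (so $k(n-k)$ is unchanged and $q^{n-k}$ is replaced by $q^k$):
\[
|E|\,|\Theta| \;=\; I(E,\mathcal{A}) \;\leq\; \frac{|E|\,|\mathcal{A}|}{q^k} + \bigl(q^{k(n-k)}|E|\,|\mathcal{A}|\bigr)^{1/2}(1+o(1)).
\]
Substituting $|\mathcal{A}|\leq u|\Theta|$ and using $u\leq q^k/2$ gives
\[
|E|\,|\Theta| \;\leq\; \tfrac{1}{2}|E|\,|\Theta| + \bigl(q^{k(n-k)}|E|\,u\,|\Theta|\bigr)^{1/2}(1+o(1)),
\]
and after absorbing the first term on the right into the left, squaring, and dividing by $|E|\,|\Theta|$, we obtain the desired bound $|\Theta|\leq(4+o(1))\,u\,q^{k(n-k)}|E|^{-1}$.

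The only genuinely delicate point is the bookkeeping in step one, where one has to be sure that different exceptional subspaces $V$ contribute disjoint collections of affine $(n-k)$-planes to $\mathcal{A}$; without this one only gets $I(E,\mathcal{A})\leq u|\Theta|\cdot(\text{something})$ rather than the clean identity $I(E,\mathcal{A})=|E|\,|\Theta|$ that drives the argument. Once that observation is in place the rest is routine manipulation of the incidence inequality, and the hypothesis $u\leq q^k/2$ is exactly what is needed to guarantee that the main term $|E|\,|\mathcal{A}|/q^k$ can be absorbed into the left-hand side, leaving the square-root term as the effective bound.
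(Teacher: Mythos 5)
Your proposal is correct and follows essentially the same route as the paper: you construct the same disjoint union $\mathcal{A}=\bigcup_{V\in\Theta}\pi_V(E)$ of affine $(n-k)$-planes, establish the same bounds $|\mathcal{A}|\leq u|\Theta|$ and $I(E,\mathcal{A})\geq|E||\Theta|$ (you note the exact equality, which indeed holds), and then apply Theorem \ref{incidence theorem} and absorb the main term using $u\leq q^k/2$ exactly as the paper does.
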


\begin{proof}[Proof of Corollary \ref{projfromit}]
    Define  
\begin{equation*}
  \Theta \coloneqq \{V\in G(k,n): |\pi_V(E)|\leq u\}, 
\end{equation*}
and let $\mathcal{A} \subseteq A(n-k,n)$ be given by
\begin{equation}
\label{definioon of A}
    \mathcal{A}\coloneqq\bigcup_{V \in \Theta}\pi_V(E),
\end{equation}
recalling Definition \ref{projdef}. We observe that the union in \eqref{definioon of A} is disjoint. Indeed, suppose it is not disjoint; then there exist distinct $V_1, V_2\in \Theta$ such that $x+V_1^{\perp}=y+V_2^{\perp}$ for some $x,y \in \mathbb{F}_q^n$. This implies that $z+V_1^{\perp}=V_2^{\perp}$ for $z=x-y$. Since $0\in V_2^{\perp}$, there exists $w\in V_1^{\perp}$ such that $z+w=0$, meaning $z=-w\in V_1^{\perp}$. Consequently, we have $V_2^{\perp}=z+V_1^{\perp}=V_1^{\perp}$, and hence $V_1=V_2$, leading to a contradiction.

We need the following useful estimates.
\begin{lma}
\label{properties of A}
The following inequalities hold:
    \begin{enumerate}
    \item $|\mathcal{A} | \leq u |\Theta|$,
    \item $I(E,\mathcal{A}) \geq |\Theta| |E|$.
\end{enumerate}
\end{lma}
\begin{proof}[Proof of Lemma \ref{properties of A}]
\
\begin{enumerate}
    \item From \eqref{definioon of A}, we have:
    \begin{align*}
        |\mathcal{A}|&\leq \sum_{V\in \Theta}|\pi_V(E)|\leq \sum_{V\in \Theta}u=u|\Theta|.
    \end{align*}
    \item We use the disjointness of the union in \eqref{definioon of A}, established above, to prove the second claim:
    \begin{align*}
        I(E,\mathcal{A})=\sum_{V\in \Theta}I(E,\pi_V(E))=\sum_{V\in \Theta}\sum_{p\in E}|\{A\in \pi_V(E): p\in A\}|\geq |\Theta||E|,
    \end{align*}
    where we have used the fact that for any fixed $V\in \Theta$ and $p\in E$, we have $|\{A\in \pi_V(E): p\in A\}|\geq 1$. This follows because if $V\in \Theta$, then $V^{\perp}$ has $q^k$ distinct cosets of the form $\{t_j+V^{\perp}\}_{j=1}^{q^k}$. Since $E=\sqcup_{j=1}^{q^k}E\cap (t_j+V^{\perp})$, there exists some $j$ such that $p\in E\cap (t_j+V^{\perp})$. \qedhere
\end{enumerate}
\end{proof}

Applying Theorem \ref{incidence theorem}, Lemma \ref{properties of A}, and given that $0<u\leq \frac{q^k}{2}$, we obtain
 \begin{align*}
 |\Theta||E| &\leq I(E,\mathcal{A}) \leq \frac{|E| |\mathcal{A}|}{q^k} +(1+o(1)) q^{\frac{k(n-k)}{2}}|E|^{\frac{1}{2}}|\mathcal{A}|^{\frac{1}{2}}\\
 &\leq \frac{u|E||\Theta|}{q^k} +(1+o(1)) u^{\frac{1}{2}}q^{\frac{k(n-k)}{2}}|E|^{\frac{1}{2}}|\Theta|^{\frac{1}{2}}\\
 &\leq\frac{|E||\Theta|}{2} +(1+o(1)) u^{\frac{1}{2}}q^{\frac{k(n-k)}{2}}|E|^{\frac{1}{2}}|\Theta|^{\frac{1}{2}}.
 \end{align*}
 Rearranging terms, we obtain
\begin{equation*}
\frac{|\Theta||E|}{2}\leq(1+o(1)) u^{\frac{1}{2}}q^{\frac{k(n-k)}{2}}|E|^{\frac{1}{2}}|\Theta|^{\frac{1}{2}}.
\end{equation*} 
Squaring both sides and simplifying, we conclude
\begin{equation*}
|\Theta| \leq (4+o(1)) uq^{k(n-k)}|E|^{-1},    
\end{equation*}
as required.
\end{proof}

\section*{Acknowledgments}

We are grateful to Alex Iosevich, Alex McDonald, Ana de Orellana, and Steve Senger for their helpful comments and discussions.

\end{document}